\newtheorem{theorem}{Theorem}[section]
\newtheorem{lemma}[theorem]{Lemma}
\newtheorem{proposition}[theorem]{Proposition}
\newtheorem{remark}[theorem]{Remark}
\newenvironment{proof}[1][Proof]{\textbf{#1.} }{\ \rule{0.5em}{0.5em}}
\def\R{\mathbb{R}}
\def\T{\mathbb{T}}
\def\d{\varepsilon}
\def\s{\sigma}
\def\dd{a}
\renewcommand{\O}{\mathcal{O}}
\newcommand{\uns}{\mathrm{u}}
\newcommand{\sta}{\mathrm{s}}
\newcommand{\hz}{H\!Z}
\newcommand{\cc}{\Lambda}
\newcommand{\J}{\mathcal{J}}
\title{
Hopf-Zero singularities truly unfold chaos
}
\begin{document}

\title{
Hopf-Zero singularities truly unfold chaos
}

\author{I. Baldom\'a \footnote{Universitat Polit\`ecnica de Catalunya,
              Barcelona Graduate School of Mathematics (BGSMATH),
              immaculada.baldoma@upc.edu  }, S. Ib\'a\~{n}ez \footnote{Universidad de Oviedo,
              mesa@uniovi.es}, T.M. Seara \footnote{Universitat Polit\`ecnica de Catalunya,
            Barcelona Graduate School of Mathematics (BGSMATH),
            tere.m-seara@upc.edu}.}


\date{\today}
\maketitle

\begin{abstract}
We provide conditions to guarantee the occurrence of Shilnikov bifurcations in analytic unfoldings of some
Hopf-Zero singularities through a beyond all order phenomenon: the exponentially small breakdown of invariant manifolds which coincide at any order of the normal form procedure. The conditions are computable and satisfied by generic singularities and generic unfoldings.

The existence of Shilnikov bifurcations in the $\mathcal{C}^r$ case was already argued by Guckenheimer in the $80$'s. About the same time, endowing the space of $\mathcal{C}^\infty$ unfoldings with a convenient topology, persistence and density of the Shilnikov phenomenon was proved by Broer and Vegter in 1984. However, since the proof involves the use of flat perturbations, this approach is not valid in the analytic context. What is more, none of the mentioned approaches provide a computable criteria to decide whether a given unfolding exhibits Shilnikov bifurcations or not.

This work ends the whole discussion by showing that, under generic and checkable hypothesis, any analytic unfolding of a Hopf-Zero singularity within the appropriate class contains Shilnikov homoclinic orbits, and as a consequence chaotic dynamics.
\end{abstract}

\section{Introduction}

Results guaranteeing the existence of chaotic dynamics are hardly available in the literature.
Although there is a plenty of examples based on numerical evidences, analytical proofs are rare.
It is only recently that simple criteria are available for certain sets of dynamical systems as, for instance, unfoldings of singularities of vector fields; the general framework of this paper.

The route through dynamical complexity starts with Poincar\'e. In his seminal essay \cite{Poin1890},
he discovered that homoclinic intersections between the invariant manifolds of a hyperbolic fixed point were sources of very complicated behaviours.
Later  \cite{Bir35}, Birkhoff showed that, for  planar diffeomorphisms, near a transverse homoclinic intersection there exists an extremely intricate set of periodic orbits.
By the mid 60's, Smale \cite{Sma67} conceived his celebrated horseshoe to explain, via conjugations to Bernoulli shifts,
the Birkhoff result and also additional features of the complicated dynamics arising near a homoclinic intersection.
In \cite{MV93}, Mora and Viana proved the appearance of strange attractors when tangent homoclinic intersections were unfolded in families of planar diffeomorphisms.
These attractors are like those in \cite{BC91} for the H\'{e}non family, that is, they are non hyperbolic and persistent in the sense of measure.

On the other hand, Shilnikov \cite{Shil65} proved the counterpart of the Birkhoff results in the context of smooth vector fields on $\mathbb{R}^{3}$.
Namely, he proved the existence of a countable set of periodic orbits in every neighbourhood of a homoclinic orbit to a hyperbolic equilibrium point with eigenvalues
$\lambda $ and $-\varrho \pm\omega i$, with $0<\varrho<\lambda $.
Because of the resemblance with the analogous statement for transverse homoclinic points in diffeomorphisms, one would expect to find
the Smale horseshoes playing a key role.
And so it was that Tresser \cite{Tresser} showed that in every neighbourhood of a Shilnikov homoclinic orbit,
an infinite number of linked horseshoes can be defined in such a way that the dynamics is conjugated to a subshift of finite type on an infinite number of symbols.
Moreover, when the homoclinic connection is generically unfolded, disappearance of horseshoes is accompanied by unfoldings of homoclinic tangencies to periodic orbits,
leading to persistent non hyperbolic strange attractors like those in \cite{MV93}.
In \cite{Puma97,Puma01,homburg2002}, it was proven that infinitely many of these attractors can coexist for non generic families of vector fields.
For an extensive study of the phenomena accompanying homoclinic bifurcations see \cite{PalisTakens,bondiavia}.
Shilnikov homoclinic orbits are the simplest global configurations which, as just argued, can explain the existence of chaotic dynamics in families of vector fields,
but, unfortunately, their existence is again not easy to prove in a given system.

Finally, in this searching of friendly criteria, we come into the world of singularities.
They are, needless to say, manageable objects, much more manageable than global structures such as homoclinic intersections.
It has been proven that Shilnikov saddle-focus homoclinic orbits appear in generic smooth unfoldings of certain singularities.
Indeed, in \cite{ibarod1995} it was proven that these configurations were unfolded in generic unfoldings of three-dimensional nilpotent singularities of codimension four.
Degeneracy was reduced in \cite{ibarod2005}, where Shilnikov homoclinic bifurcations were proven to exist in any generic unfolding of a three-dimensional
nilpotent singularity of codimension three.
Therefore, suspended H\'enon like strange attractors, appear in generic unfoldings of such singularities.
See \cite{BIR11} for additional technical details and \cite{dumibakok2001,dumibakok2006} for complementary results regarding the rich unfolding
of the three-dimensional nilpotent singularity. In higher dimensions there also exists singularities of low codimension which can play the role of organizing centers of chaotic dynamics. For instance, \cite{DIR2019} provides numerical evidences of the existence of strange attractors in the unfolding of the codimension-three Hopf-Bogdanov-Takens bifurcation. This bifurcation has also been studied in the context of Hamiltonian systems (see \cite{JBL2016}).

One of the main motivations for obtaining simple criteria for the existence of strange attractors in given families of vector fields is their applicability.
Results in \cite{ibarod2005} have been successfully used to prove the existence of chaotic dynamics in a model of coupled oscillators (see \cite{DIR2007}),
in a general class of delay differential equations (see \cite{CY2008}) and also in a three-dimensional predator-prey model (see \cite{DIP2019}).

Once it has been proven that chaos is unfolded by low codimension singularities, a natural question arises: which is the lowest codimension
level where there exist singularities unfolding strange attractors or, in other words, what is the simplest local bifurcation displaying strange attractors.
Obviously, one is not such lowest codimension because the only codimension one bifurcations are the saddle-node bifurcation of equilibrium points and the Hopf bifurcation,
none of them including chaos.
So, if strange attractors are unfolded from codimension three singularities, the question is: do there exist codimension-two local bifurcations unfolding generically strange attractors?

In this paper we consider Hopf-Zero ($\hz$ in the sequel) singularities, that is,  three-dimensional vector fields $X^*$ such that
$X^*(0,0,0)=0$ and $DX^*(0,0,0)$ has eigenvalues $\pm i \alpha^*$ and $0$, with $\alpha* > 0$.
In fact, without loss of generality, we can assume that:
\begin{equation}
\label{hopf_zero_linear}
DX^*(0,0,0)=\left(\begin{matrix}
0 & \alpha^* & 0\\
-\alpha^* & 0 & 0\\
0 & 0 & 0
\end{matrix}
\right).
\end{equation}

Classification of $\hz$ singularities was first done by Takens \cite{Tak74}.
Up to an analytic local change of coordinates the 2-jet of a $\hz$ singularity can be written as:
\begin{equation}\label{fn_sing_2jet}
\left \{
\begin{aligned}
x'&=y-axz\\
y'&=-x-ayz\\
z'&=cz^2 +b(x^2+y^2).
\end{aligned}\right .
\end{equation}
At the 1-jet level we need to impose two degeneracy conditions which, joined to the open conditions $a\,b\,c\neq 0$, define a stratum of codimension
two in the space of germs of singularities of vector fields on $\mathbb{R}^3$.
Because $c\neq 0$, we can assume that this coefficient has been normalized by means of an appropriate scaling of coordinates.
Takens proved that there are six topological types, see Figure~\ref{fig_Singularitat}, but we are only interested in one of them,
that type characterized by the conditions
\begin{equation}\label{HZ*}
a>0, \quad b>0.
\end{equation}
$\hz$ singularities satisfying (\ref{HZ*}) are denoted by  $\hz^*$ in the sequel.
\begin{figure}[t!]
\begin{center}
\includegraphics[width=0.9\textwidth]{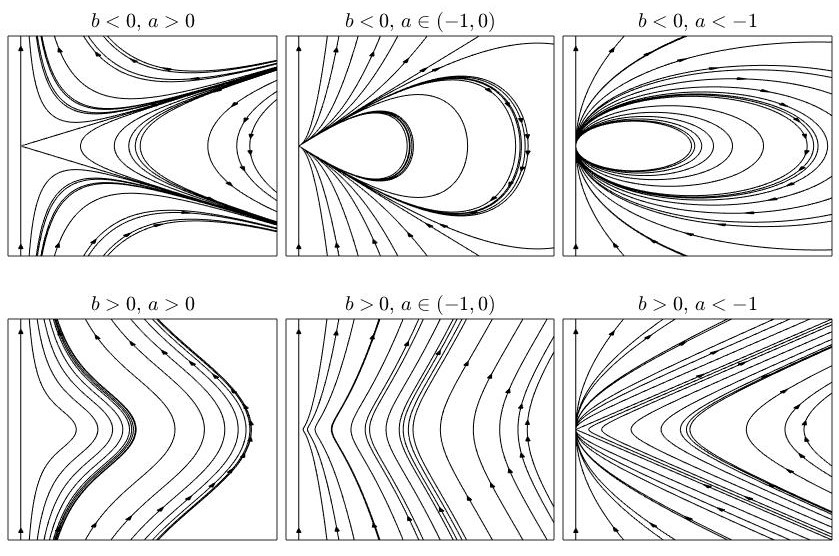}
\end{center}
\caption{Different topological types of Hopf-Zero singularities depending on $a$ and $b$.}
\label{fig_Singularitat}
\end{figure}

Generic unfoldings of $\hz$ singularities of codimension two were first studied by Guckenheimer \cite{Guc81} and Gavrilov \cite{Gav}.
The reader can find the bifurcation diagrams of suitable truncated normal forms for each of the six topological types in either \cite{GH90} or \cite{KUZ}.
Truncating at second order, any generic unfolding  can be written as:
\begin{equation}\label{fn_unf_2jet}
\left \{
\begin{aligned}
x'&=y+\nu x-axz\\
y'&=-x+\nu y -ayz\\
z'&=-\mu + z^2 +b(x^2+y^2)
\end{aligned}\right .
\end{equation}
with $\mu$, $\nu$ being the parameters of the unfolding, or, in cylindrical coordinates, as
\begin{equation}\label{fn_unf_2jet_cyl}
\left \{
\begin{aligned}
r'&=\nu r -arz\\
z'&=-\mu + z^2 +br^2\\
\theta'&=-1.
\end{aligned}\right.
\end{equation}

The characterization of $HZ^*$ singularities by means of their normal form does not allow to detect if a
given vector field belongs to $HZ^*$ without performing changes of variables to reduce it to its normal form.
For that reason, in Lemma \ref{lemma:normalform}, we prove that the intrinsic conditions:
\begin{equation}\label{opencondition1}
\begin{aligned}
&\left[\partial_{z^2}^2 \pi^z X^*\cdot \big (\partial_{x^2}^2 \pi^z X^*+ \partial_{y^2}^2 \pi^z X^*\big )\right](\mathbf{0})>0,\\
&\left[\partial_{z^2}^2 \pi^z X^*\cdot \big (\partial_{x,z}^2 \pi^x X^*+ \partial_{y,z}^2 \pi^y X^*\big )\right](\mathbf{0})<0 ,
\end{aligned}
\end{equation}
where $(\pi ^x,\pi^y,\pi ^z)$ denote the $(x,y,z)$ components of the vector field $X^*$,
define the vector fields $X^* \in HZ^*$ in terms of the two jet of $X^*$.
Furthermore, we prove that any unfolding $X_{\mu,\nu}$ satisfying the generic condition:
\begin{equation}\label{genericunfoldingdissipative}
\begin{aligned}
\Big[\Big(\partial_{x,\nu}^2 \pi^x X_{0,0} +  & \partial_{y,\nu} \pi^y X_{0,0}\Big)\partial_{\mu} \pi^z X_{0,0}
\\  & -\Big(\partial_{x,\mu}^2 \pi^x X_{0,0} + \partial_{y,\mu} \pi^y X_{0,0}\Big)\partial_{\nu} \pi^z X_{0,0} \Big](\mathbf{0})\neq 0
\end{aligned}
\end{equation}
has \eqref{fn_unf_2jet} as truncated normal form of order two.

When $\nu=0$, family (\ref{fn_unf_2jet_cyl}) has a first integral
$$
H(r,z)=r^{\frac{2}{a}}\left(-\mu+z^2+\frac{b}{1+a}r^2\right).
$$
Regarding the case of $\hz^*$ singularities, it easily follows from the existence of $H$  that, when $\mu>0$, the two-dimensional invariant manifolds of
the equilibrium points $p_\pm=(\pm\sqrt{\mu},0,0)$ form an invariant globe and, moreover,
the branches of the one-dimensional invariant manifolds contained inside the globe are also
coincident (see Figure \ref{fig_globe}).
\begin{figure}[t!]
\begin{center}
\includegraphics[width=0.4\textwidth]{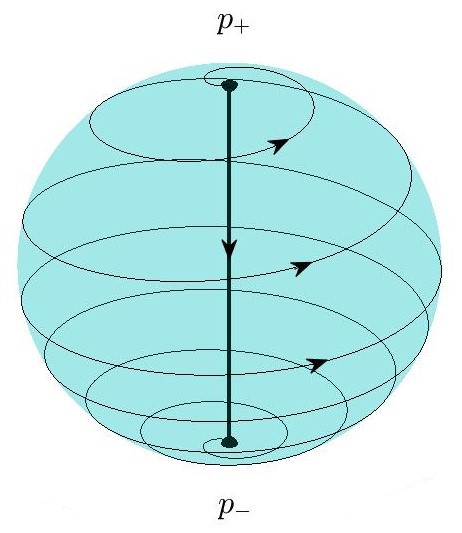}
\end{center}
\caption{Sketch of the invariant globe in the phase portrait of (\ref{fn_unf_2jet}) when $\nu=0$.
There are two saddle-focus points $p_+$ and $p_-$ such that $\mathrm{dim}(W^u(p_+))=\mathrm{dim}(W^s(p_-))=1$ and $\mathrm{dim}(W^s(p_+))=\mathrm{dim}(W^u(p_-))=2$.
One of the branches of $W^u(p_+)\setminus \{p_+\}$ coincides with one of the branches of $W^s(p_-)\setminus \{p_-\}$.
Moreover $W^u(p_-)\setminus \{p_-\}=W^s(p_+)\setminus \{p_+\}$.}
\label{fig_globe}
\end{figure}

Adding higher order terms, these invariant structures can be destroyed (even if the rotationally symmetry is preserved).
One can guess that the splitting of the invariant manifolds could lead to saddle-focus homoclinic connections.
The complexity to decide if such invariant manifolds whether split or not, strongly depends on the regularity of the
considered unfoldings, being the analytic case (the one considered in this paper) the more intricate one.

Simple geometrical arguments were supplied in \cite{Guc81,GH90} to support the possibility of the existence of Shilnikov homoclinic bifurcations in $\mathcal{C}^k$ unfoldings of
$\hz^*$ singularities.
For the specific case of families $X+\varepsilon Y$, where $X$ is the normal form (\ref{fn_unf_2jet}) and $\varepsilon>0$ is a perturbation parameter,
\cite{Gas93} proves the occurrence of Shilnikov homoclinic bifurcations near the codimension two point.
The tricky point here is that the generic unfoldings of a $\hz^*$ singularity can not  be written
in the form $X+\varepsilon Y$ (see also \cite{CK2004}).

The question is considered with a different perspective by Broer and Vegter in \cite{BroerV}.
They prove that for any generic $\mathcal{C}^\infty$ unfolding of a $\hz^*$ singularity there exists a ${\mathcal{C}}^\infty$ flat perturbation providing a family
exhibiting Shilnikov homoclinic bifurcations.
Moreover, abundance of unfoldings displaying the Shilnikov scenario is discussed, but using suitable topologies.
Because of the ``flat'' nature of the techniques, the remarkable results in \cite{BroerV} neither include any usable criterium for the existence of Shilnikov homoclinic bifurcations in  a given unfolding of $\hz^*$ singularities nor can be applied to analytic families unfolding
these singularities. Therefore the analytic case remains open since \cite{BroerV}.
As is made clear in that paper, Shilnikov phenomenon is beyond all orders.
Results in \cite{BroerV} were extended to time reversible unfoldings in \cite{LTW}.
In this paper we end with the whole discussion by showing that under generic hypotheses, any analytic unfolding of a Hopf-Zero singularity $\hz^*$ contains Shilnikov
homoclinic orbits.

General unfoldings of the $\hz^*$ singularity were considered in \cite{DIKS13}.
As argued there, introducing a scaling parameter $\varepsilon=\sqrt{\mu}$ and scaling variables and time,
one obtains either a singular perturbation problem with a pure rotation when $\varepsilon=0$
or a family with rotation speed tending to $\infty$ as $\varepsilon \to 0$.
In any of the two approaches there is no clear limit for the invariant manifolds of the two equilibrium points corresponding to the poles of the invariant globe
already discussed for (\ref{fn_unf_2jet_cyl}).
Nevertheless, one can apply the results in \cite{BF03,BF05} to prove that, when the scaling parameter tends to $0$, the invariant manifolds have a limit position,
given by the invariant manifolds of the equilibrium points at the $2$-jet level, at least when one considers restrictions to $z\geq 0$ or $z\leq 0$.
Therefore, for any generic unfolding of the $\hz^*$ singularity, splitting distance functions are well defined for both the one-dimensional and the
two-dimensional invariant manifolds. Using conjectured formulas for the splitting functions
and some extra conditions, existence of Shilnikov homoclinic bifurcation points is proven for general unfoldings.

On the other hand, in \cite{BCS13,BCS16a,BCS16b} explicit formulas for the splitting functions are obtained.
The splitting function for the one-dimensional invariant manifolds was achieved in \cite{BCS13}.
It follows that the distance between the one-dimensional invariant manifolds is exponentially small with respect to $\varepsilon$.
Moreover the coefficient in front of the dominant term depends on the full jet of the singularity.
The splitting function for the two-dimensional invariant manifolds was derived in \cite{BCS16a,BCS16b}.
The mean free terms in the asymptotic formula are exponentially small with respect to
$\varepsilon$ and, again, constants involved in the expression for those terms, which now depend on an angular variable, depend on the full jet of the singularity.
Because constants involved in the dominant terms depend on the full jet of the singularity, their computation can only be done by means of numerical
techniques (see \cite{DIKS13} for several examples).
In any case, such constants are the essential pieces to establish general criteria for the existence of Shilnikov homoclinic orbits.
Namely, depending on the accuracy of the result one deals with,
one needs to assume that either one or two of these constants do not vanish.

The point is that, putting together \cite{DIKS13,BCS13,BCS16a,BCS16b}
as well as additional results, we are able to provide general results for the existence of
Shilnikov homoclinic bifurcations in generic analytic unfoldings of the $\hz^*$ singularity.

At this point it should be noticed that $\hz$ singularities can be considered in two different contexts.
At the 1-jet level any $\hz$ singularity has divergence zero (note that the trace of (\ref{hopf_zero_linear}) is zero).
If one restricts to the set of volume-preserving systems, $\hz$ singularities are of codimension one, that is,
they occur generically in one parameter families of volume preserving vector fields.
In such a case we should consider $\nu=0$ in (\ref{fn_sing_2jet}) and (\ref{fn_unf_2jet_cyl}) and refer to unfoldings $X_{\mu}$.
We will require in this case, replacing (\ref{genericunfoldingdissipative}) the generic condition:
\begin{equation}\label{genericunfoldingconservative}
\partial_{\mu} \pi^z X_{0} (0,0,0) \neq 0 .
\end{equation}
In the general context, as already explained, $\hz$ singularities have codimension two, that is, they arise generically in two parameter families
$X_{\mu,\nu}$ of vector fields.
Note that according to \cite{broer81}, when working with families of vector fields with divergence zero,
the change of coordinates to reduce to normal form may be chosen volume preserving.

Now we are able to provide a qualitative version of our main result. Later, after introducing some technical details in Section~\ref{sec:pre}, we will give a more formal statement.

\begin{theorem}\label{qualitativeTheorem}
Let $X^*$ be a $\hz$ singularity satisfying the open conditions in~\eqref{opencondition1} and some generic conditions
(that will be given explicitly in theorems~\ref{Theoremdissipative} and~\ref{Theoremconservative}).
\begin{itemize}
 \item
In the volume preserving case, any divergence free generic unfolding $X_{\mu}$ satisfying the generic
condition~\eqref{genericunfoldingconservative}
has the following properties:
\begin{enumerate}
\item
For $\mu$ small enough, the vector field $X_{\mu}$ has,  at least, two heteroclinic orbits, formed by the intersection between the two-dimensional invariant manifolds of the equilibrium points.
\item
There exists a sequence of parameter values $\{\mu_n\}$
with $\mu_n\to 0$ as $n\to \infty$ such that the vector fields $X_{\mu_n}$ have a Shilnikov homoclinic orbit.
\end{enumerate}
\item In the general case, let $X^*$ satisfy the additional open condition
\begin{equation}\label{opencondition2}
\left |(\partial_{x,z}^2 \pi^x X^* + \partial_{y,z}^2 \pi^y X^*)(\mathbf{0})\right | < 2\left |\partial_{z^2}^2 \pi^z X^*)(\mathbf{0})\right|,
\end{equation}
and let $X_{\mu,\nu}$ be any analytic unfolding satisfying the generic condition~\eqref{genericunfoldingdissipative}.
Then, in the $(\mu,\nu)$ plane,
there exists an analytic  curve $\Gamma_0=\{(\mu,\nu)\, : \, \nu = \nu_0(\mu)\}$, with $\nu_0(0)=0$,
and domains $\mathcal{W}_2\subset \mathcal{W}_1$ contained in a wedge-shaped neighbourhood of
$\Gamma_0$ of a width that is exponentially small in $\sqrt{\mu}$ (see Figure~\ref{fig_wedge}) such that:
\begin{enumerate}
\item There exists an immersed curve $\cc \subset \mathcal{W}_1$ (maybe with more than one component)
such that the vector field $X_{\mu,\nu}$ has a Shilnikov homoclinic orbit, for $(\mu,\nu) \in \cc$.
\item
For $(\mu,\nu) \in \mathcal{W}_2$,
the vector field $X_{\mu,\nu}$ has, at least, two heteroclinic orbits, formed by the intersection between the two-dimensional invariant manifolds of the equilibrium points.
\item
There exists an open neighbourhood of the origin $\J\subset \R^3$ such that
$\mathcal{W}_2=\bigcup _{\rho \in \J} \Gamma_\rho $, where for any $\rho \in \J$,
$\Gamma_\rho=\{(\mu,\nu) \, : \, \nu = \nu_\rho(\mu)\}$ is a curve exponentially close to $\Gamma_0$.
In addition, for any $\rho\in \J$
there exists a sequence $\mu_n\to 0$ as $n\to \infty$
such that each vector field  $X_{\mu_n,\nu_\rho(\mu_n)}$ has a Shilnikov homoclinic orbit.
\end{enumerate}
\end{itemize}
\end{theorem}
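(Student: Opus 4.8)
The plan is to reduce $X^{*}$ to normal form, rescale so that the problem becomes a singular perturbation of an integrable rotation‑symmetric system whose heteroclinic set is the invariant globe of Figure~\ref{fig_globe}, invoke the beyond‑all‑orders asymptotic formulas already available for the splitting of the one‑ and two‑dimensional invariant manifolds, and finally extract the heteroclinic and Shilnikov homoclinic orbits by zero‑counting together with a gluing argument near the saddle‑foci. Concretely, by Lemma~\ref{lemma:normalform} the open conditions~\eqref{opencondition1} characterise the class $\hz^{*}$, while~\eqref{genericunfoldingdissipative} (resp.~\eqref{genericunfoldingconservative}, the reduction being volume preserving by~\cite{broer81}) forces $X_{\mu,\nu}$ (resp. $X_{\mu}$) to have~\eqref{fn_unf_2jet} — or its $\nu=0$ specialisation — as truncated normal form of order two, so we may assume that $2$‑jet. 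Following~\cite{DIKS13} we set $\varepsilon=\sqrt{\mu}$, rescale $(x,y,z)=\varepsilon(\xi,\eta,\zeta)$, $t=\tau/\varepsilon$, and treat $\nu$ as a slow parameter: the rescaled field is an $\mathcal{O}(\varepsilon)$‑perturbation of the planar integrable system obtained from~\eqref{fn_unf_2jet_cyl} with $\mu=1$, $\nu=0$ — whose first integral is $H$ and whose phase portrait is the globe — coupled to the fast rotation $\theta'=-1/\varepsilon$; the poles $p_{\pm}$ of the globe are the equilibria, at which, at the $2$‑jet, the inner branch of the one‑dimensional $W^{\uns}(p_{+})$ coincides with $W^{\sta}(p_{-})$ and the two‑dimensional $W^{\uns}(p_{-})$ coincides with $W^{\sta}(p_{+})$.

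By~\cite{BF03,BF05} these four manifolds persist for $\varepsilon$ small and, restricted to $\{z\ge 0\}$ and to $\{z\le 0\}$, converge as $\varepsilon\to 0$ to the $2$‑jet manifolds, so the signed one‑dimensional splitting $d_{1}(\mu,\nu)$ (gap between $W^{\uns}(p_{+})$ and $W^{\sta}(p_{-})$ on a transverse section) and the $2\pi$‑periodic two‑dimensional splitting $\mathcal{S}(\theta;\mu,\nu)$ (gap between $W^{\uns}(p_{-})$ and $W^{\sta}(p_{+})$, $\theta$ the angular variable) are well defined. I would then quote~\cite{BCS13} and~\cite{BCS16a,BCS16b}: both are exponentially small, $d_{1}=\mathcal{C}_{1}\,\mathcal{E}_{1}(\varepsilon)\bigl(\sin\Theta_{1}(\varepsilon)+o(1)\bigr)$ and $\mathcal{S}(\theta;\mu,\nu)=\mathcal{A}(\mu,\nu)+\mathcal{C}_{2}\,\mathcal{E}_{2}(\varepsilon)\bigl(\cos(\theta+\Theta_{2}(\varepsilon))+o(1)\bigr)$, where $\mathcal{E}_{i}(\varepsilon)=\varepsilon^{\beta_{i}}e^{-\kappa/\varepsilon}$ is exponentially small ($\kappa>0$), the phases $\Theta_{i}(\varepsilon)$ grow like $1/\varepsilon$, and the leading coefficients $\mathcal{C}_{1},\mathcal{C}_{2}$ depend on the full jet of $X^{*}$; the angular mean $\mathcal{A}$ vanishes identically when $\nu=0$ (zero flux for a divergence‑free field), and otherwise its polynomially‑leading part vanishes along an analytic curve $\Gamma_{0}=\{\nu=\nu_{0}(\mu)\}$ with $\nu_{0}(0)=0$ and $\nu_{0}(\mu)=o(\sqrt{\mu})$. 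The generic hypotheses of Theorems~\ref{Theoremdissipative} and~\ref{Theoremconservative} are precisely $\mathcal{C}_{1}\neq 0$ (so that $d_{1}$ changes sign infinitely often as $\mu\to 0^{+}$, since $\Theta_{1}\to\infty$) and $\mathcal{C}_{2}\neq 0$. The two‑dimensional heteroclinics then follow at once: when $\nu=0$, $\mathcal{S}(\cdot;\mu)$ is a nonconstant $2\pi$‑periodic function of zero mean, hence has at least two zeros, i.e. $W^{\uns}(p_{-})\cap W^{\sta}(p_{+})$ contains at least two orbits; in the general case, with $\mathcal{W}_{1}$ a wedge around $\Gamma_{0}$ on which these asymptotics hold (Figure~\ref{fig_wedge}) and $\mathcal{W}_{2}\subset\mathcal{W}_{1}$ the sub‑wedge — of exponentially small width in $\sqrt{\mu}$ — where $|\mathcal{A}(\mu,\nu)|<|\mathcal{C}_{2}|\,\mathcal{E}_{2}(\varepsilon)$, the same counting gives at least two heteroclinic orbits for $(\mu,\nu)\in\mathcal{W}_{2}$; and the level sets of $\mathcal{A}$ (equivalently, of the splitting phase) foliate $\mathcal{W}_{2}$ by curves $\Gamma_{\rho}=\{\nu=\nu_{\rho}(\mu)\}$, $\rho\in\mathcal{J}$, each exponentially close to $\Gamma_{0}$.

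It remains to build the Shilnikov homoclinic orbits. Fixing $\Gamma_{\rho}$ (or simply letting $\mu$ vary, in the conservative case) I would follow $W^{\uns}(p_{+})$ forward: when $d_{1}$ has the right sign it enters a small neighbourhood of $p_{-}$ without converging to it, so by the standard estimates for the passage near a hyperbolic equilibrium it exits $C^{1}$‑close to $W^{\uns}(p_{-})=W^{\sta}(p_{+})$; hence the condition that $W^{\uns}(p_{+})$ actually meet $W^{\sta}(p_{+})$ — i.e. that there be an orbit homoclinic to $p_{+}$ — reduces to the vanishing of a single scalar function of the parameter whose leading term inherits the sign oscillation of $d_{1}$ and therefore, because $\mathcal{C}_{1}\neq 0$, changes sign infinitely many times as $\mu\to 0$. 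The intermediate value theorem then yields a sequence $\mu_{n}\to 0$ of homoclinic parameters on each $\Gamma_{\rho}$ (this is item~(3) of the general part, and item~(2) of the conservative part), and the union of all such parameters is the immersed curve $\cc\subset\mathcal{W}_{1}$ of item~(1). That these homoclinic orbits are of Shilnikov type and carry chaos follows by computing $DX_{\mu,\nu}(p_{\pm})$: the eigenvalues are $\lambda=2\sqrt{\mu}\,(1+o(1))>0$ and $-\varrho\pm i\,(1+o(1))$ with $\varrho=a\sqrt{\mu}\,(1+o(1))>0$ on $\mathcal{W}_{1}$ (using $\nu_{0}(\mu)=o(\sqrt{\mu})$), so $p_{\pm}$ are saddle‑foci; the wild inequality $0<\varrho<\lambda$ holds automatically in the divergence‑free case, where the vanishing trace of $DX_{\mu}(p_{\pm})$ forces $\lambda=2\varrho$, and in the general case because $\varrho/\lambda\to a/2$, which is $<1$ exactly by~\eqref{opencondition2} (in normal form~\eqref{opencondition2} reads $a<2$). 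Tresser's theorem~\cite{Tresser} then produces, near each such homoclinic orbit, infinitely many linked horseshoes, hence chaotic dynamics.

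The main obstacle is not the existence of the splitting — this is the deep beyond‑all‑orders content already secured in~\cite{BCS13,BCS16a,BCS16b} — but rather handling the one‑ and two‑dimensional splittings \emph{simultaneously}: their leading phases depend differently on $(\mu,\nu)$, and one must control all the exponentially small remainders uniformly on a parameter wedge in order to carve $\mathcal{W}_{2}\subset\mathcal{W}_{1}$ and to foliate $\mathcal{W}_{2}$ by the curves $\Gamma_{\rho}$. The second delicate point is the gluing above, which must upgrade the two \emph{soft} facts ``the one‑dimensional manifolds almost coincide'' and ``the two‑dimensional manifolds intersect transversally'' into an honest Shilnikov homoclinic orbit, with enough parameter‑uniformity that the homoclinic parameters accumulate at the codimension‑two point.
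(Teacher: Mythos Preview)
Your overall architecture (normal form, rescaling, quoting the exponentially small splitting formulas, then a gluing argument) matches the paper's, but the mechanism you propose for producing the homoclinic orbits is wrong in a way that would make the argument fail. You write $d_{1}=\mathcal{C}_{1}\,\mathcal{E}_{1}(\varepsilon)\bigl(\sin\Theta_{1}(\varepsilon)+o(1)\bigr)$ and claim that, because $\Theta_{1}\to\infty$, the one–dimensional splitting $d_{1}$ changes sign infinitely often as $\mu\to 0$, and that this sign oscillation is inherited by the homoclinic condition. But the formula from~\cite{BCS13} (Theorem~\ref{thm:splitting1d}) is
\[
S^{1}(\varepsilon,\sigma)=\varepsilon^{-1+a}\,\mathrm{e}^{-\pi/(2\varepsilon)}\,\mathrm{e}^{-\pi c_0/2}\bigl(\mathcal{C}^{*}+\mathcal{O}(|\log\varepsilon|^{-1})\bigr),
\]
with \emph{no} oscillatory phase: if $\mathcal{C}^{*}\neq 0$ then $S^{1}$ has a fixed sign for all small $\varepsilon$. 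So the ``sign oscillation of $d_{1}$'' you rely on does not exist, and your route to the sequences $\{\mu_{n}\}$ and to the curve $\Lambda$ collapses. (Note also that the two splittings carry \emph{different} exponential rates, $\pi/2$ versus $\pi/(2a)$; this ratio is exactly what enters hypothesis~H4 below.)

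The paper's actual mechanism is quite different and is the real technical content of Section~\ref{sec:proof}. One follows $W^{\uns}(p_{+})$ past its first intersection $q_{0}$ with $\{z=0\}$, through a neighbourhood of $p_{-}$, and proves (Proposition~\ref{prop:2crossings}) two quantitative facts about its \emph{second} intersection $Q_{0}$: (i) $\mathrm{dist}(Q_{0},\mathbf{C}^{\uns})\le \bar C_{1}\,\varepsilon^{2-2/a}\mathrm{e}^{-\pi/(a\varepsilon)}$, and (ii) the accumulated angle satisfies $\theta_{0}\ge d/\varepsilon^{2}$. Item~(i) is what gives the curve $\Lambda^{*}$: for fixed $\varepsilon$ one varies $\sigma$ across the wedge so that $\mathbf{C}^{\uns}$ passes from inside to outside $\mathbf{C}^{\sta}$; since $Q_{0}$ stays closer to $\mathbf{C}^{\uns}$ than the maximal gap between the two curves, $Q_{0}$ must cross $\mathbf{C}^{\sta}$ for some $\sigma$. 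Item~(ii) is what produces the sequences $\{\mu_{n}\}$: the oscillation comes not from $S^{1}$ but from the fact that $\theta_{0}(\varepsilon)$ runs to $+\infty$ while the zeros $\theta_{0}^{*}(\varepsilon)$ of the $\theta$–oscillatory part $S^{2}_{b}$ of the two–dimensional splitting drift only like $\log\varepsilon$; hence $\theta_{0}-\theta_{0}^{*}\to\infty$, which is hypothesis~H5 of Theorem~\ref{thm:diks} (from~\cite{DIKS13}), and together with H1--H4 this yields the sequence. Your ``standard estimates for the passage near a hyperbolic equilibrium'' do not by themselves give either the sharp bound in~(i) (which needs a Gronwall argument against the first integral $H$ along the near-globe excursion, Lemmas~\ref{lemadif}--\ref{elipsoide}) or the $1/\varepsilon^{2}$ angular growth in~(ii); these are precisely the points where the paper does the work.
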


\begin{figure}[t!]
\begin{center}
\includegraphics[width=0.5\textwidth]{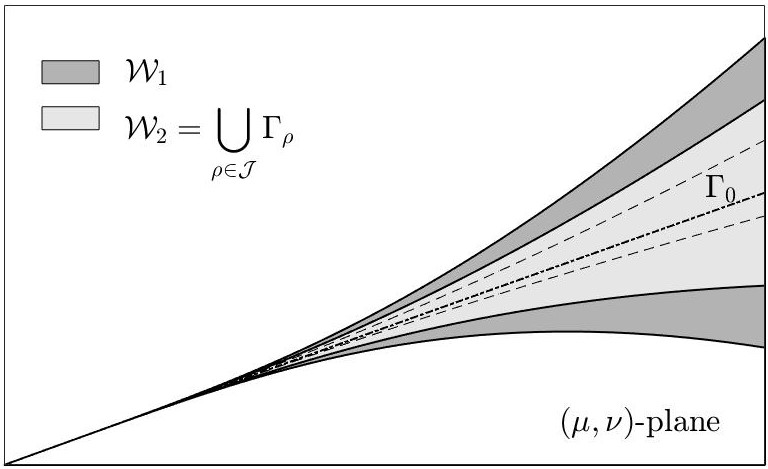}
\end{center}
\caption{Wedge-shaped domains $\mathcal{W}_2\subset \mathcal{W}_1$ and the curve $\Gamma_0$. $\mathcal{W}_1$ corresponds to the union of dark and light grey regions. The discontinuous lines in $\mathcal{W}_2$ are
two curves $\Gamma_\rho$.}
\label{fig_wedge}
\end{figure}

\begin{remark}
The generic conditions on the singularity $X^*$ in main Theorem~\ref{qualitativeTheorem} depend on the full jet of the singularity.
In fact they involve that some Stokes constants have to be different from zero.
\end{remark}

\begin{remark}\label{Rem0a2}
The additional open condition~\eqref{opencondition2} is not necessary to prove the existence of homoclinic orbits to saddle-focus equilibrium points,
but it appears in the main Theorem \ref{qualitativeTheorem} to get the expansivity condition which is required to have Shilnikov homoclinic orbits.
Namely, denoting the eigenvalues at the saddle-focus as $-\varrho \pm \omega i$ and $\lambda$, with $\varrho>0$ and $\lambda>0$,
the expansivity condition is $\lambda>\varrho$  (see \cite{Shil67,Shil65}). If~\eqref{opencondition2} holds, the eigenvalues at $p_+$
satisfy such condition for $\nu$ and $\mu$ small enough. At $p_-$ the expansivity condition is satisfied for the backward flow.
\end{remark}

\begin{remark}
It should be noticed that the occurrence of chaotic dynamics is also expectable in the unfolding of certain codimension-two Hopf-Hopf singularities, that is, singularities with two pairs of pure imaginary eigenvalues without resonances. The classification was obtained in \cite{Tak74} and, together with the study of truncated unfoldings, it can be found in \cite{GH90}. Similarly to the case of the Hopf-Zero singularities, the truncation of the normal form leads to planar reductions exhibiting, under appropriate assumptions, heteroclinic cycles involving equilibrium points and, bearing in mind the four-dimensional system, periodic orbits. The treatment required to get results of existence of homoclinic orbits is likely to be not far from the techniques described and used in this paper where we deal with the Hopf-Zero case.
\end{remark}

In Section~\ref{sec:pre}, to understand the role of all the open conditions stated in Theorem \ref{qualitativeTheorem},
we explain the derivation of the normal form of the unfoldings of a $\hz$ singularity up to second order.
Using such normal form, after appropriate scalings, we will write our unfoldings in the form to be used in the rest of the paper.
We will discuss the existence of equilibrium points, their invariant manifolds and their possible intersections.
At the end of Section~\ref{sec:pre} we will give a more quantitative version of our main theorem split into two results,
Theorem \ref{Theoremdissipative} in the general case and Theorem \ref{Theoremconservative} in the volume preserving case. Proofs are given in Section~\ref{sec:proof}.

\section{Preliminaries and a more quantitative result}\label{sec:pre}
This section is mainly devoted to recall some previous results about the dynamics of the unfoldings $X_{\mu,\nu}$ as well as
to write a more quantitative version of our main result, Theorem~\ref{qualitativeTheorem}.
In addition, we also prove Lemma~\ref{lemma:normalform} which gives the correspondence between the intrinsic conditions on
$X^*$ and $X_{\mu,\nu}$ stated in Theorem~\ref{qualitativeTheorem}
and the truncated normal form~\eqref{fn_unf_2jet}.

In the sequel, we will deal with both cases (the volume preserving and the general case) at the same time because
the volume preserving case is contained in the general one by putting  $\nu=0$ and assuming that $\text{tr}DX_{\mu,0}(0,0,0)=0$.

\subsection{Normal form and blow ups}
Even if the proof of the result below is elementary following the procedure indicated for instance in~\cite{Guc81,GH90,Tak74},
we  include it to follow the relation between the original vector field and the corresponding normal form.
\begin{lemma}\label{lemma:normalform}
Let $X^*$ be a $\hz$ singularity satisfying the open condition~\eqref{opencondition1}.
Assume the generic condition~\eqref{genericunfoldingdissipative} on the unfolding $X_{\mu,\nu}$ in the general case
and~\eqref{genericunfoldingconservative} in the volume preserving case.

Then there exists a real analytic change of variables\ and parameters such that the unfolding $X_{\mu,\nu}$ is given by
\begin{equation}\label{formanormalanalitica0}
\left \{
\begin{aligned}
x'&=y+\nu x-axz+f(x,y,z,\nu,\mu)\\
y'&=-x+\nu y -ayz+g(x,y,z,\nu,\mu) \\
z'&=-\mu+z^2 +b(x^2+y^2)+h(x,y,z,\nu,\mu)
\end{aligned}\right .
\end{equation}
where  $a,b>0$ and the functions $f,g,h$ are $\mathcal{O}(\|(x,y,z,\nu,\mu)\|^3)$ and real analytic.
In addition, if $X^*$ also satisfies~\eqref{opencondition2}, then $0<a<2$.
\end{lemma}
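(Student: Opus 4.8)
The proof will carry out the classical Takens--Poincar\'e--Dulac reduction of the family $X_{\mu,\nu}$ up to second order (as in \cite{Guc81,GH90,Tak74}), the only real content being to follow how the intrinsic quantities in \eqref{opencondition1}, \eqref{opencondition2} and \eqref{genericunfoldingdissipative} are transformed. First I would normalize the linear part: since $DX^*(\mathbf 0)$ has eigenvalues $\pm i\alpha^*$ and $0$ with $\alpha^*>0$, a real linear change of coordinates puts it in the form \eqref{hopf_zero_linear}, and a rescaling of time normalizes $\alpha^*=1$; this fixes the rotational $SO(2)$ structure in the $(x,y)$ plane used throughout. Then I would apply the standard near-identity polynomial transformation, depending analytically on the parameters, that removes the non-resonant quadratic terms; this brings the $2$-jet of $X_{0,0}$ to the form \eqref{fn_sing_2jet} for some real $a,b,c$, and at the same time straightens the parameter dependence.

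The arithmetic of the intrinsic conditions is then checked directly on \eqref{fn_sing_2jet}: for a vector field whose $2$-jet has that form one computes, at $\mathbf 0$,
\[
\partial_{z^2}^2\pi^z X^*=2c,\qquad
\partial_{x^2}^2\pi^z X^*+\partial_{y^2}^2\pi^z X^*=4b,\qquad
\partial_{x,z}^2\pi^x X^*+\partial_{y,z}^2\pi^y X^*=-2a,
\]
so that the two expressions in \eqref{opencondition1} equal $8bc$ and $-4ac$, whence \eqref{opencondition1} is equivalent to $bc>0$ and $ac>0$, while the quantity in \eqref{opencondition2} becomes $2|a|<4|c|$, i.e.\ $|a|<2|c|$. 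The crucial observation is that these particular combinations of second derivatives are exactly the ones that survive the reduction: they are $SO(2)$ invariant, and a short computation shows that they annihilate the range of the homological operator $P\mapsto DP\cdot Lx-LP$ on quadratic fields, so they are unchanged by the near-identity normal form transformation, while the linear rescalings only multiply them by positive factors. Consequently the hypotheses on $X^*$ are encoded, without loss, by $bc>0$, $ac>0$ and (under \eqref{opencondition2}) $|a|<2|c|$. If $c<0$ we apply $z\mapsto -z$, which flips the signs of $a,b,c$ simultaneously, so $bc$, $ac$ and $|a|/|c|$ are unaffected and now $c>0$; then $bc>0$ and $ac>0$ force $a,b>0$. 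A scaling of $(x,y,z)$ (and, if needed, of time) normalizes $c=1$ while keeping all signs, and then $|a|<2$, i.e.\ $0<a<2$, whenever \eqref{opencondition2} holds.

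It remains to put the parameter dependence in the displayed form. After the reduction, the part of the $z$-equation that is linear in $(\mu,\nu)$ contributes the row $(\partial_\mu\pi^z,\partial_\nu\pi^z)(\mathbf 0)$ and the real part of the Hopf eigenvalue contributes the row $(\partial_{x,\mu}^2\pi^x+\partial_{y,\mu}^2\pi^y,\ \partial_{x,\nu}^2\pi^x+\partial_{y,\nu}^2\pi^y)(\mathbf 0)$; condition \eqref{genericunfoldingdissipative} is precisely the non-vanishing of the determinant of the resulting $2\times2$ matrix, so the map sending $(\mu,\nu)$ to the pair formed by minus the $z$-drift and the real part of the Hopf eigenvalue is an analytic local diffeomorphism at the origin, which we adopt as the new parameters; this produces the terms $-\mu$ and $\nu$ in \eqref{formanormalanalitica0}. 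A parameter-dependent translation of coordinates absorbs the parameter terms of order zero in $(x,y,z)$, and a parameter-dependent rescaling of time absorbs the dependence of the rotation speed on $(\mu,\nu)$, so that all remaining parameter contributions land in the $\mathcal{O}(\|(x,y,z,\nu,\mu)\|^3)$ remainder. In the volume preserving case one has $\nu=0$ and a divergence-free family, uses \cite{broer81} to keep each of these changes volume preserving, and replaces the determinant condition by the one-dimensional non-degeneracy \eqref{genericunfoldingconservative}. Since all the changes (linear maps, finitely many polynomial near-identity transformations, analytic reparametrizations of the parameters and of time, analytic shifts) are real analytic, and the $2$-jet has been placed exactly in the desired form, the remainders $f,g,h$ are real analytic and $\mathcal{O}(\|(x,y,z,\nu,\mu)\|^3)$, which is \eqref{formanormalanalitica0}.

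The normal form reduction itself is routine; the point that requires care is the verification in the second paragraph that the specific combinations of partial derivatives appearing in \eqref{opencondition1}, \eqref{opencondition2} and \eqref{genericunfoldingdissipative} are exactly the invariants of the whole chain of changes (up to positive factors, respectively up to an invertible change of parameters), so that the conditions phrased intrinsically on $X^*$ and on the parameter jet of $X_{\mu,\nu}$ translate faithfully into $a,b>0$, $0<a<2$ and into the invertibility of the parameter map. This is precisely why those formulas are the correct intrinsic characterizations of the class $\hz^*$ and of a generic unfolding.
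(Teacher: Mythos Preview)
Your strategy coincides with the paper's: reduce to second-order normal form, identify the intrinsic quantities in \eqref{opencondition1}--\eqref{opencondition2} with the normal form coefficients, reparametrize $(\mu,\nu)$ via \eqref{genericunfoldingdissipative}, and rescale. Your invariance argument for the second-derivative combinations is a neat conceptual replacement for the paper's explicit computation of $\beta_1,\gamma_1,\gamma_2$ in terms of the original jet, and the sign discussion via $z\mapsto -z$ is the same mechanism the paper uses implicitly.

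There is one genuine omission, though. The full second-order normal form of a $\hz$ singularity carries a \emph{resonant} frequency correction: in the paper's notation the $(x,y)$ part reads
\[
\dot{\tilde x}=\tilde x(\ldots-\beta_1\tilde z)+\tilde y\bigl(\alpha^*+\tilde\alpha_1\tilde\nu+\tilde\alpha_2\tilde\mu+\alpha_3\tilde z\bigr),
\]
and the term $\alpha_3\tilde y\tilde z$ (together with $-\alpha_3\tilde x\tilde z$ in $\dot{\tilde y}$) lies in the kernel of $\mathrm{ad}_L$, so no near-identity polynomial transformation removes it. Hence your assertion that such a transformation already brings the $2$-jet of $X_{0,0}$ to \eqref{fn_sing_2jet} is not correct, and a merely ``parameter-dependent rescaling of time'' cannot eliminate a $z$-dependent coefficient either; if left, the quadratic term $\alpha_3 yz$ would not fit into the $\mathcal{O}(\|\cdot\|^3)$ remainder of \eqref{formanormalanalitica0}. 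The paper deals with this by an orbital equivalence: it multiplies the entire vector field by the state-dependent factor $\bigl(\alpha^*+\tilde\alpha_1\tilde\nu+\tilde\alpha_2\tilde\mu+\alpha_3\tilde z\bigr)^{-1}$, which simultaneously normalizes the rotation to $1$ and pushes all the frequency corrections into the cubic remainder. Once you insert this state-dependent time reparametrization (rather than only a parameter-dependent one), your argument goes through and matches the paper's.
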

\begin{proof}
We prove the result in the general case, being the divergence free case a straightforward consequence by performing
the obvious changes.
The unfolding $X_{\mu,\nu}$ can be written as
$$
X_{\mu,\nu}(x,y,z)= (\alpha^* y, -\alpha^*x, 0)^\top + c \mu + d\nu + \O_{2}({x},{y},{z},\mu,\nu),
$$
with $c=(c_1,c_2,c_3)^\top \in \mathbb{R}^3$, $d=(d_1,d_2,d_3)^\top \in \mathbb{R}^3$ and where $\O_{2}$ stands for a vector field with components of order
$\O(\|({x},{y},{z},\mu,\nu)\|^2)$.
By means of the change of variables
$$
\bar{x} =x- \frac{(c_2 \mu + d_2\nu)}{\alpha^*}, \ \
\bar{y}=y+\frac{(c_1 \mu + d_1\nu)}{\alpha^*},
$$
we transform the unfolding into (keeping the same notation)
\begin{equation}\label{defabz}
X_{\mu,\nu}(x,y,z)= (\alpha^* y, -\alpha^*x, c_3 \mu + d_3\nu)^\top  + \O_{2}({x},{y},{z},\mu,\nu).
\end{equation}

After a second order normal form procedure applied to $X_{\mu,\nu}(x,y,z)$,
it can be written as (see~\cite{Guc81,GH90,Tak74}):
\begin{eqnarray*}
 \frac{d\tilde{x}}{d{t}}&=&\tilde{x}\left(\beta_0^1\nu + \beta_0^2 \mu -\beta_1\tilde{z}\right)+
 \tilde{y}\left(\alpha^*+\alpha_1\nu+\alpha_2\mu+\alpha_3\tilde{z}\right)+
 O_3(\tilde{x},\tilde{y},\tilde{z},\mu,\nu),\medskip\nonumber\\
\frac{d\tilde{y}}{d{t}}&=&-\tilde{x}\left(\alpha^*+\alpha_1\nu+\alpha_2\mu+
\alpha_3\tilde{z}\right)+\tilde{y}\left(\beta_0^1\nu+\beta_0^2 \mu-\beta_1\tilde{z}\right)
+O_3(\tilde{x},\tilde{y},\tilde{z},\mu,\nu),\medskip\\
\frac{d\tilde{z}}{d{t}}&=&-\gamma_0^1\mu - \gamma_0^2 \nu+\gamma_1\tilde{z}^2+
\gamma_2(\tilde{x}^2+\tilde{y}^2)\medskip\nonumber\\
&&+\gamma_3\mu^2+\gamma_4\nu^2+\gamma_5\mu\nu +\gamma_{1}^{1}\mu \tilde z+\gamma_{1}^{2}\nu \tilde z+
O_3(\tilde{x},\tilde{y},\tilde{z},\mu,\nu).\nonumber
\end{eqnarray*}
The coefficients $\beta_1,\gamma_1,\gamma_2$ and $\alpha_3$ depend only on the initial singularity and straightforward computations give:
$$
\beta_1 = -\frac{1}{2}\big (\partial_{x,z}^2 \pi^x X^* + \partial_{y,z}^2 \pi^y X^*)(\mathbf{0}),\qquad
\gamma_1 =\frac{1}{2}\partial_{z^2}^2 \pi^z X^* (\mathbf{0}),
$$
$$
\gamma_2= \frac{1}{4}\left(\partial_{x^2}^2 \pi^z X^*+ \partial_{y^2}^2 \pi^z X^*\right)(\mathbf{0}).
$$
It follows from  conditions in~\eqref{opencondition1} and~\eqref{opencondition2} that
\begin{equation}\label{openconditionparameters1}
\gamma_1 \gamma_2 >0,\qquad \gamma_1 \beta_1>0,\qquad |\beta_1|<2|\gamma_1|.
\end{equation}
\\
The terms $\beta_0^1,\beta_0^2, \gamma_0^1,\gamma_0^2$ depend on the second order derivatives of $X_{\mu,\nu}$ at zero,
that is, on the terms of degree two of the unfolding.
Namely, it is straightforward to check that
\begin{eqnarray*}
\beta_0^1 = \frac{1}{2} \big (\partial_{x,\nu}^2 \pi^x X_{\mu,\nu} + \partial_{y,\nu} \pi^y X_{\mu,\nu}\big ) (\mathbf{0}) && \gamma_0^1= -c_3,
\\
\beta_0^2 = \frac{1}{2} \big (\partial_{x,\mu}^2 \pi^x X_{\mu,\nu} + \partial_{y,\mu} \pi^y X_{\mu,\nu}\big )(\mathbf{0}) && \gamma_{0}^2=-d_3,
\end{eqnarray*}
where $c_3$ and $d_3$ are introduced in~\eqref{defabz}.
Note that
$$
\gamma_0^1= -c_3=-\partial_{\mu} \pi^z X_{0,0} (\mathbf{0}) \qquad  \gamma_0^2= -d_3=-\partial_{\nu} \pi^z X_{0,0} (\mathbf{0}).
$$
It follows from the generic condition~\eqref{genericunfoldingdissipative} that $\beta_0^1\gamma_0^1-\beta_0^2\gamma_0^2\neq 0$.
Hence we can introduce new parameters
\begin{equation*}
\tilde \nu = \beta_0^1\nu + \beta_0^2 \mu \, ,
\qquad
\tilde \mu = \gamma_0^1\mu + \gamma_0^2 \nu \, ,
\end{equation*}
obtaining
\begin{eqnarray}\label{normalformcartabc}
\frac{d\tilde{x}}{d{t}}&=&\tilde{x}\left(\tilde\nu -\beta_1\tilde{z}\right)+
 \tilde{y}\left(\alpha^*+\tilde\alpha_1\tilde \nu+\tilde \alpha_2 \tilde \mu+\alpha_3\tilde{z}\right)+
 O_3(\tilde{x},\tilde{y},\tilde{z},\tilde \mu, \tilde \nu),\medskip\nonumber\\
\frac{d\tilde{y}}{d{t}}&=&-\tilde{x}\left(\alpha^*+\tilde\alpha_1\tilde \nu+\tilde \alpha_2 \tilde \mu+\alpha_3\tilde{z}\right)+
\tilde{y}\left(\tilde\nu-\beta_1\tilde{z}\right)
+O_3(\tilde{x},\tilde{y},\tilde{z},\tilde\mu,\tilde\nu),\medskip\\
\frac{d\tilde{z}}{d{t}}&=&-\tilde\mu+\gamma_1\tilde{z}^2+
\gamma_2(\tilde{x}^2+\tilde{y}^2)
+\tilde\gamma_3\tilde\mu^2+\tilde\gamma_4\tilde\nu^2+\tilde\gamma_5\tilde\mu\tilde\nu
\medskip\nonumber\\
&&
+\tilde\gamma_{1}^{1}\tilde\mu z+\tilde\gamma_{1}^{2}\tilde\nu z+
O_3(\tilde{x},\tilde{y},\tilde{z},\tilde\mu,\tilde\nu).\nonumber
\end{eqnarray}
Expressions of $\tilde\alpha_1$, $\tilde\alpha_2$, $\tilde\gamma_3$, $\tilde\gamma_4$, $\tilde\gamma_5$, $\tilde\gamma_1^1$ and
$\tilde\gamma_1^2$ are not provided because they are not relevant in the sequel.
Since $\alpha^{*}>0$, in a neighbourhood of $(\tilde{x},\tilde{y},\tilde{z},\tilde{\mu},\tilde{\nu})=(0,0,0,0,0)$ we can multiply the vector field
(\ref{normalformcartabc}) by the function
$$
\frac{1}{\alpha^*+\tilde \alpha_1 \tilde{\nu}+\tilde \alpha_2\tilde{\mu}+\alpha_3 \tilde z}
=\frac{1}{\alpha^*}-\frac{\tilde \alpha_1}{(\alpha^*)^2}\tilde{\nu}-\frac{\tilde \alpha_2}{(\alpha^*)^2}\tilde{\mu}-\frac{\alpha_3}{(\alpha^*)^2}\tilde z
+\O(\|(\tilde z,\tilde{\mu},\tilde{\nu})\|^2)
$$
to get the equivalent family
\begin{eqnarray*}
\frac{d \bar x}{d \tau}&=&\bar x\left(\bar\beta_0 \tilde{\nu} -\bar\beta_1 \bar z\right)+\bar y+\O(\|(\bar x,\bar y,\bar z,\tilde{\mu},\tilde{\nu})\|^3)
\nonumber
\\
\frac{d \bar y}{d \tau}&=&-\bar x+\bar y\left(\bar\beta_0 \tilde{\nu} -\bar\beta_1 \bar z\right)+\O(\|(\bar x,\bar y,\bar z,\tilde{\mu},\tilde{\nu})\|^3)
\\
\frac{d \bar z}{d \tau}&=&-\bar\gamma_0\tilde{\mu}+\bar\gamma_1 \bar z^2+\bar\gamma_2(\bar x^2+\bar y^2)+\bar\gamma_3\tilde{\mu}^2
+\bar\gamma_4\tilde{\nu}^2+\bar\gamma_5\tilde{\mu}\tilde{\nu}
\nonumber
\\
&&+\bar\gamma_{1}^{1}\tilde\mu z+\bar\gamma_{1}^{2}\tilde\nu z
+\O(\|(\bar x,\bar y,\bar z,\tilde{\mu},\tilde{\nu})\|^3)
\nonumber
\end{eqnarray*}
with
$$
\bar\beta_0=\frac{1}{\alpha^*},\qquad
\bar\beta_1=\frac{\beta_1}{\alpha^*},\qquad
\bar\gamma_0=\frac{1}{\alpha^*},\qquad
\bar\gamma_1=\frac{\gamma_1}{\alpha^*},\qquad
\bar\gamma_2=\frac{\gamma_2}{\alpha^*}.
$$
Precise expressions for $\bar\gamma_3$, $\bar\gamma_4$, $\bar\gamma_5$, $\bar\gamma_1^1$ and $\bar\gamma_1^1$ are not relevant for further calculations.
Then, since the condition~\eqref{openconditionparameters1} is satisfied, $\bar \gamma_1\neq 0$ and we can introduce the
change of variables:
$$
\hat x=\bar x, \qquad \hat y=\bar y ,\qquad \hat z=\bar \gamma_1\bar z+\frac{\bar \gamma_1^1\tilde\mu+\bar \gamma_1^2\tilde\nu}{2},
$$
and parameters:
\begin{align*}
\hat\mu & =\tilde\gamma_0\tilde\gamma_1 \tilde\mu+\frac{(\bar\gamma_1^1 \tilde \mu+\bar\gamma_1^2 \tilde \nu)^2 }{4}-
\bar\gamma_1\bar\gamma_3\tilde\mu^2-\bar\gamma_1\bar\gamma_4\tilde\nu^2-\bar\gamma_1\bar\gamma_5\tilde \mu \tilde \nu,
\\
\hat\nu&=\bar\beta_0 \tilde \nu+ \frac{\bar\beta_1(\bar\gamma_1^1 \tilde \mu+\bar\gamma_1^2 \tilde \nu)}{2\bar\gamma_1}\tilde \mu,
\end{align*}
to obtain the normal form \eqref{formanormalanalitica0}:
\begin{eqnarray*}
\frac{d \hat x}{d \tau}&=&\hat x\left(\hat\nu -a \hat z\right)+\hat y+O(\|(\hat x,\hat y,\hat z,\hat\mu,\hat\nu)\|^3)
\nonumber
\\
\frac{d \hat y}{d \tau}&=&-\hat x+\hat y\left(\hat\nu -a \hat z\right)+O(\|(\hat x,\hat y,\hat z,\hat\mu,\hat\nu)\|^3)
\\
\frac{d \hat z}{d \tau}&=&-\hat\mu+\hat z^2+b(\hat x^2+\hat y^2)+O(\|(\hat x,\hat y,\hat z,\hat\mu,\hat\nu)\|^3)
\nonumber
\end{eqnarray*}
with
$$
a=\frac{\bar\beta_1}{\bar\gamma_1},\qquad b=\bar\gamma_1\bar\gamma_2
$$
or equivalently:
$$
a = \frac{\beta_1}{\gamma_1},\qquad b= \frac{\gamma_1 \gamma_2}{(\alpha^* )^2}.
$$

Note that the condition~\eqref{openconditionparameters1} is now equivalent to:
$$
b>0, \qquad a>0, \qquad a<2,
$$
where the two first conditions correspond to~\eqref{opencondition1} and the third one to~\eqref{opencondition2}.
\end{proof}

To conclude this preliminary section, we introduce the new parameters $ \varepsilon, \sigma$ as:
\begin{equation*}
(\mu,\nu) = (\varepsilon^2 ,\varepsilon \sigma),\qquad \varepsilon >0
\end{equation*}
and the blow up
$$
x=\varepsilon \bar{x} ,\quad y=\varepsilon \bar{y},\quad z=\varepsilon \bar{z}.
$$
Then, system~\eqref{formanormalanalitica0} becomes (dropping bars of the notation):
\begin{equation}\label{formanormalanalitica1}
\begin{aligned}
\frac{d {x}}{dt}&= \varepsilon {x}(\sigma {x}- a{z}) + {y}+
\varepsilon^{-1} f(\varepsilon {x},\varepsilon {y},\varepsilon {z},\varepsilon^2,\varepsilon \sigma)\\
\frac{d{y}}{dt}&=-{x}+\varepsilon {y}(\sigma  -a {z})+
\varepsilon^{-1}g(\varepsilon {x},\varepsilon {y},\varepsilon {z},\varepsilon^2,\varepsilon \sigma) \\
\frac{d{z}}{dt}&=-\varepsilon+ \varepsilon {z}^2 + \varepsilon b({x}^2+{y}^2)+
\varepsilon^{-1}h(\varepsilon {x},\varepsilon {y},\varepsilon {z},\varepsilon^2,\varepsilon \sigma).
\end{aligned}
\end{equation}
Also, scaling time, $s=\varepsilon t$ we obtain the following vector field, that we will call, abusing notation, $X_{\varepsilon, \sigma}$
in the general case and $X_{\varepsilon}$ in the volume preserving case ($\sigma=0$).
\begin{equation}\label{formanormalanalitica2}
\begin{aligned}
\frac{d{x}}{d s}&= {x} (\sigma - a{z}) + \frac{{y}}{\varepsilon}+
\varepsilon^{-2} f(\varepsilon {x},\varepsilon {y},\varepsilon {z},\varepsilon^2,\varepsilon \sigma)\\
\frac{d{y}}{d s}&=-\frac{{x}}{\varepsilon}+ {y} (\sigma - a{z})+
\varepsilon^{-2} g(\varepsilon {x},\varepsilon {y},\varepsilon {z},\varepsilon^2,\varepsilon \sigma) \\
\frac{d{z}}{d s}&= -1+ {z}^2   +b({x}^2+{y}^2)+
\varepsilon^{-2} h(\varepsilon {x},\varepsilon {y},\varepsilon {z},\varepsilon^2,\varepsilon \sigma).
\end{aligned}
\end{equation}
From now on we will work with the generic unfoldings already in the form~\eqref{formanormalanalitica2}.
\begin{remark}
Families (\ref{formanormalanalitica1}) and (\ref{formanormalanalitica2}) were also used in \cite{DIKS13,BCS13,BCS16a,BCS16b}.
Note that for family (\ref{formanormalanalitica1}), the limit when $\varepsilon\to 0$ is merely a rotation around the vertical axis.
On the contrary, in the case of (\ref{formanormalanalitica2}) there is no regular limit when $\varepsilon\to 0$ because the rotation speed tends to $\infty$.
\end{remark}

\subsection{Previous results and main theorems}

We will state and prove the quantitative version of Theorem~\ref{qualitativeTheorem} in terms of system~\eqref{formanormalanalitica2}.
For that reason, the first result we use is the following lemma, whose proof can be found  in \cite{BaSe06},
which assures the existence of saddle-focus equilibrium points:
\begin{lemma}
Consider system \eqref{formanormalanalitica2} with $b>0$, $ a>0$ and $|\sigma| <a$.
Then, there exists $\varepsilon _0>0$ such that, for $0<\varepsilon<\varepsilon_0$, the vector field has two equilibrium points
$p_\pm$ (depending on $\varepsilon$) of saddle-focus type such that
$p_+$ ($p_-$) has a one-dimensional unstable (stable) manifold and a two-dimensional stable (unstable) one.
\end{lemma}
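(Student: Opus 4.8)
The plan is to locate the equilibria by the implicit function theorem and then read off the spectrum of the linearization, exploiting the fact that, although the vector field \eqref{formanormalanalitica2} has no regular limit as $\varepsilon\to0$, its \emph{equilibrium equations} do, once the factors $1/\varepsilon$ and $1/\varepsilon^{2}$ have been cleared. Concretely, I would multiply the first two components of \eqref{formanormalanalitica2} by $\varepsilon$ and leave the third one unchanged, so that the equilibria are exactly the zeros of a map $F(x,y,z,\varepsilon,\sigma)$. Since $f,g,h=\mathcal{O}(\|(x,y,z,\nu,\mu)\|^{3})$, evaluating at $(\varepsilon x,\varepsilon y,\varepsilon z,\varepsilon^{2},\varepsilon\sigma)$ produces quantities divisible by $\varepsilon^{3}$; hence $\varepsilon^{-1}f$, $\varepsilon^{-1}g$, $\varepsilon^{-2}h$ are real analytic in all variables near $\varepsilon=0$ and vanish there. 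Thus $F$ is real analytic near $(0,0,\pm1,0,\sigma)$, and at $\varepsilon=0$ it equals $\big(y,\,-x,\,-1+z^{2}+b(x^{2}+y^{2})\big)$, whose zeros are $(0,0,\pm1)$; the Jacobian in $(x,y,z)$ at each of them has determinant $\pm2\neq0$. The implicit function theorem then yields analytic branches $p_\pm=p_\pm(\varepsilon,\sigma)$ of equilibria of \eqref{formanormalanalitica2} with $p_\pm\to(0,0,\pm1)$ as $\varepsilon\to0$; in particular the first two coordinates of $p_\pm$ are $\mathcal{O}(\varepsilon)$ and the third is $\pm1+\mathcal{O}(\varepsilon)$.

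Next I would linearize \eqref{formanormalanalitica2} at $p_\pm$. Using once more that $f,g,h=\mathcal{O}(\|\cdot\|^{3})$ together with $\|p_\pm-(0,0,\pm1)\|=\mathcal{O}(\varepsilon)$, every partial derivative of the $\varepsilon^{-2}f,\varepsilon^{-2}g,\varepsilon^{-2}h$ terms at $p_\pm$ is $\mathcal{O}(\varepsilon)$, so the linearization has the block structure
\[ DX_{\varepsilon,\sigma}(p_\pm)=\begin{pmatrix}\sigma\mp a & 1/\varepsilon & 0\\ -1/\varepsilon & \sigma\mp a & 0\\ 0 & 0 & \pm2\end{pmatrix}+\mathcal{O}(\varepsilon), \]
understood entrywise. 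The unperturbed matrix has the complex pair $(\sigma\mp a)\pm i/\varepsilon$ and the real eigenvalue $\pm2$. It remains to show that the $\mathcal{O}(\varepsilon)$ perturbation does not alter this qualitative picture. Applying the Schur complement to $\lambda I-DX_{\varepsilon,\sigma}(p_\pm)$ with respect to the splitting into the first two coordinates and the third, and using that the $2\times2$ block has determinant $\sim1/\varepsilon^{2}$ and adjugate with entries $\mathcal{O}(1/\varepsilon)$ while the coupling blocks are $\mathcal{O}(\varepsilon)$, one obtains that the characteristic polynomial has exactly one real root equal to $\pm2+\mathcal{O}(\varepsilon)$, while the remaining quadratic factor has root sum $2(\sigma\mp a)+\mathcal{O}(\varepsilon)$ and negative discriminant $-4/\varepsilon^{2}+\mathcal{O}(1)$ for $\varepsilon$ small; hence a complex conjugate pair with real part $(\sigma\mp a)+\mathcal{O}(\varepsilon)$ and imaginary part of order $1/\varepsilon$.

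Finally, the hypothesis $|\sigma|<a$ gives $\sigma-a<0<\sigma+a$, so for $\varepsilon$ small the eigenvalues at $p_+$ are a complex pair with negative real part together with a positive real eigenvalue, and those at $p_-$ are a complex pair with positive real part together with a negative real eigenvalue. In both cases $p_\pm$ is a hyperbolic equilibrium of saddle-focus type, and the stable/unstable manifold theorem yields $\dim W^u(p_+)=\dim W^s(p_-)=1$ and $\dim W^s(p_+)=\dim W^u(p_-)=2$.

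The step I expect to be the main obstacle is this last spectral perturbation: the linearization is not uniformly bounded as $\varepsilon\to0$, since the focus eigenvalues scale like $1/\varepsilon$, so one cannot simply invoke continuity of eigenvalues. What makes the argument go through is that the spectral gap between the focus pair and the saddle direction is itself of order $1/\varepsilon$ in the imaginary direction, so the $\mathcal{O}(\varepsilon)$ coupling is harmless; making this quantitative — via the Schur-complement estimate above, or equivalently via a Rouch\'e argument on the characteristic polynomial or a Lyapunov--Perron block reduction — is the only delicate point.
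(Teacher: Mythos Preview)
Your argument is correct. The paper itself does not prove this lemma; it simply cites \cite{BaSe06} and moves on. What you have written is a clean, self-contained proof along the lines one would expect to find in that reference: locate the equilibria by applying the implicit function theorem to the rescaled equilibrium equations (which, as you observe, \emph{do} extend analytically to $\varepsilon=0$ once the $1/\varepsilon$ factors are cleared), and then read off the spectrum of the linearization.

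You are right to flag the spectral step as the only point requiring care, since the Jacobian has entries of order $1/\varepsilon$ and naive continuity of eigenvalues does not apply. Your handling of it is fine: the Schur-complement reduction for the real root, together with Vieta's relations on the remaining quadratic factor, is exactly the right bookkeeping. An equivalent and perhaps slightly quicker way to phrase the real-root part is to multiply the characteristic polynomial by $\varepsilon^{2}$; for bounded $\lambda$ the result limits to $\lambda\mp2$ with nonzero $\lambda$-derivative, so the implicit function theorem again gives the branch $\lambda=\pm2+\mathcal{O}(\varepsilon)$ directly. Either way, the conclusion about the sign of the real parts under $|\sigma|<a$, and hence the saddle-focus type and the manifold dimensions, follows exactly as you state.
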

Observe that, when $f=g=h=0$ in \eqref{formanormalanalitica2}, the points $p_\pm$ are $(0,0,\pm 1)$ and they are connected by the
heteroclinic orbit:
\begin{equation}\label{W1}
W_1=\{ x=y=0,\; |z| < 1\}
\end{equation}
which consists on a branch of the $1$-dimensional unstable manifold of $p_+=(0,0,1)$ that coincides with a branch of the $1$-dimensional
stable manifold of $p_-=(0,0,-1)$.

However, one expects that for generic $f,g,h$, the one dimensional heteroclinic connection \eqref{W1} breaks.
Next theorem,  which corresponds to Theorem 1 in~\cite{BCS13} just adapting the notation, gives the distance
$S^1(\sigma, \varepsilon)$
between the $1$-dimensional invariant manifolds  of $p_+$ and $p_-$ of system \eqref{formanormalanalitica2},
which is  different from zero under generic conditions on the singularity $X^*$, see Figure~\ref{fig:splitting}.

\begin{figure}
\centering
\subfloat{\includegraphics[height=5cm]{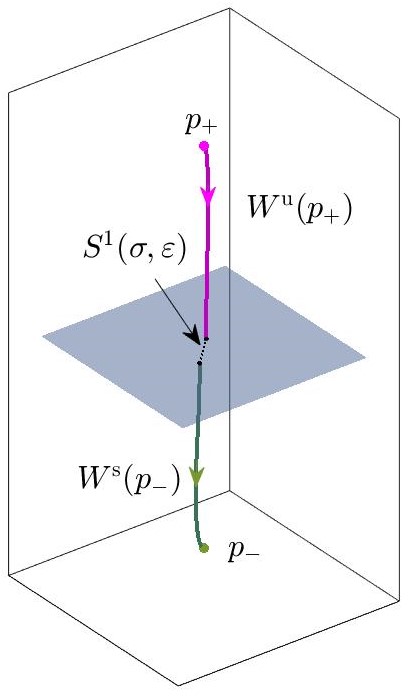}}
\hspace{1.5cm}
\subfloat{\includegraphics[height=5cm]{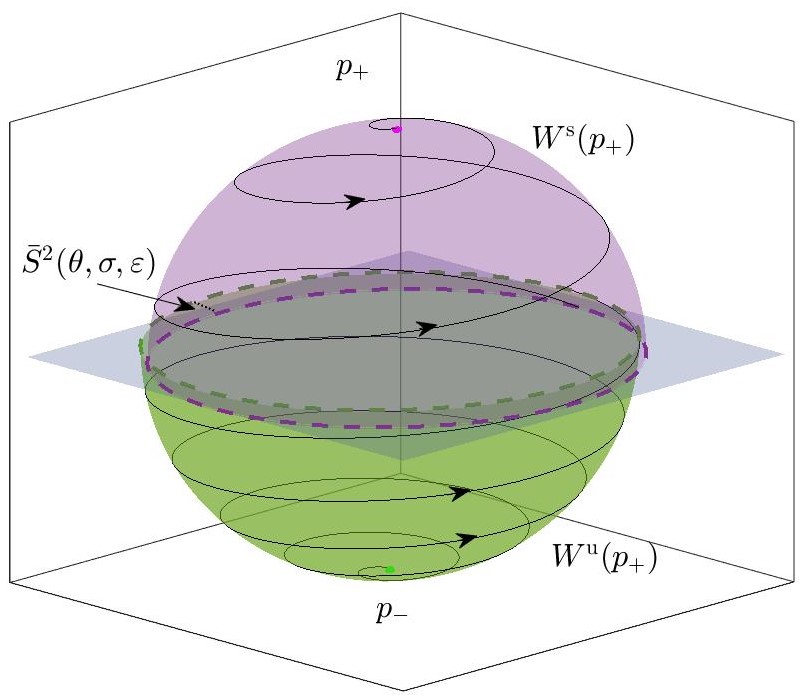}}
\caption{On the left, the splitting of the one dimensional heteroclinic connection $W^1$. On the right, the corresponding
breakdown of the two dimensional heteroclinic connection.}
\label{fig:splitting}
\end{figure}
\begin{theorem}[\cite{BCS13}]\label{thm:splitting1d}
Consider system~\eqref{formanormalanalitica2} with $a,b>0$ and $|\sigma|<a$.
Then there exist $\varepsilon_0>0$ and a real constant $\mathcal{C}^*$, such that, for $0<\varepsilon\le \varepsilon_0$,
the distance ${S}^{1}(\varepsilon,\sigma)$ between the one-dimensional stable manifold of
$p_{-}$ and the one-dimensional unstable manifold of $p_{+}$ when they meet the plane $z=0$ is given by
$$
{S}^{1}(\varepsilon,\sigma)=\varepsilon^{-1+a} \mathrm{e}^{-\frac{\pi}{2\varepsilon}}
\mathrm{e}^{-\frac{\pi c_0}{2}} \left (\mathcal{C}^* + \mathcal{O}(|\log \varepsilon|^{-1})\right )
$$
with $c_0=\lim_{z\to 0} z^{-3} h(0,0,z,0,0)$.

The constant $\mathcal{C}^*$ depends on the full jet of the singularity $X^*$ and is different from zero for generic singularities.
\end{theorem}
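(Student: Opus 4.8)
This statement is Theorem~1 of~\cite{BCS13}; the plan is to reproduce that beyond-all-orders splitting argument. First I would record the geometry of the unperturbed problem: when $f=g=h=0$, the heteroclinic~\eqref{W1} is the orbit $(x,y,z)=(0,0,z_0(s))$ with $z_0(s)=-\tanh s$, solving $z'=-1+z^2$, whose complex singularities nearest to the real line are the simple poles $s=\pm i\pi/2$. These poles will be the source of the exponent $\mathrm{e}^{-\pi/(2\varepsilon)}$; the correction $\mathrm{e}^{-\pi c_0/2}$ will appear because the cubic coefficient $c_0=\lim_{z\to0}z^{-3}h(0,0,z,0,0)$ deforms the reference solution $z_0$ near $s=i\pi/2$, effectively altering the residue that controls the dominant term.

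Next I would parametrise the one-dimensional manifolds $W^{\uns}(p_+)$ and $W^{\sta}(p_-)$ as perturbations of $W_1$, writing $w=x+iy$ so that the fast rotation becomes the linear term $-(i/\varepsilon)w$, and realise each manifold as the fixed point of a suitable integral operator. The goal of this step is a uniform analytic continuation of both manifolds to a common complex domain: first a strip $|\operatorname{Im} s|\le \pi/2-\kappa\varepsilon$, and then, through an inner (``boomerang''-shaped) domain, up to distance of order $\varepsilon$ from $s=i\pi/2$, together with the estimate that there they stay $\varepsilon$-close to $W_1$. The difference $w^{\sta}-w^{\uns}$ then solves an essentially homogeneous linear equation along $W_1$, whose homogeneous solution behaves like $(\cosh s)^{a}\,\mathrm{e}^{\sigma s}\,\mathrm{e}^{-is/\varepsilon}$, i.e. like a power $(s-i\pi/2)^{\pm a}$ near the singularity; this behaviour, combined with the $\varepsilon$-rescaling in the inner variable and the bookkeeping for $\arg w$ at the section $z=0$, is what should produce the algebraic prefactor $\varepsilon^{-1+a}$.

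The hard part is that the naive Melnikov integral for $w^{\sta}-w^{\uns}$ vanishes identically at every order in $\varepsilon$, since the two manifolds coincide to all orders of the normal form procedure; so the whole content is hidden at exponentially small order. To extract it I would analyse the inner equation obtained by the change $s=i\pi/2+\varepsilon\zeta$ together with the corresponding blow-up of $(w,z)$: the $\varepsilon\to0$ limit is an $\varepsilon$-independent system whose coefficients involve the \emph{full} Taylor jet of $X^*$, and whose distinguished stable and unstable solutions differ by a single Stokes constant, which is $\mathcal{C}^*$. Establishing the existence and the required asymptotic expansions of these inner solutions, controlling the error of the matching between the inner and the outer regimes — this matching, via the width of the overlap region, is what limits the relative error to $\mathcal{O}(|\log\varepsilon|^{-1})$ rather than $\mathcal{O}(\varepsilon)$ — and finally transporting the resulting exponentially small quantity back to a neighbourhood of $z=0$, would yield the stated formula for $S^1(\varepsilon,\sigma)$. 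I expect the inner analysis and this matching to be the main obstacle; the nonvanishing of $\mathcal{C}^*$ is not addressed by the asymptotics at all — it is a transcendental condition on the jet, which one checks case by case (in practice numerically) and simply assumes here.
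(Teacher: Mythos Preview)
The paper does not prove this theorem at all: it is quoted verbatim as Theorem~1 of \cite{BCS13}, with only the remark that the notation has been adapted, and it is used as a black box in the rest of the argument. So there is nothing in the present paper to compare your proposal against.

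That said, your outline is a faithful high-level summary of the beyond-all-orders machinery developed in \cite{BCS13} (and its companions \cite{BCS16a,BCS16b}): the unperturbed connection $z_0(s)=-\tanh s$ with poles at $s=\pm i\pi/2$ fixing the exponent $\mathrm{e}^{-\pi/(2\varepsilon)}$; analytic continuation of both one-dimensional manifolds into a common complex domain reaching distance $\mathcal{O}(\varepsilon)$ from the singularity; the inner equation in the rescaled variable $\zeta=(s-i\pi/2)/\varepsilon$ whose Stokes constant is $\mathcal{C}^*$; and the inner/outer matching responsible for the $\mathcal{O}(|\log\varepsilon|^{-1})$ relative error and the algebraic prefactor. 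Your identification of the inner analysis and matching as the technical core, and of the genericity of $\mathcal{C}^*\neq 0$ as an unverifiable (in closed form) transcendental condition, is exactly right. None of this is carried out in the paper under review; if you want to flesh it out you must go to \cite{BCS13} directly.
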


The study of the $2$-dimensional invariant manifolds of the equilibrium points $p_\pm$  of  system~\eqref{formanormalanalitica2} is more involved,
see \cite{BCS16a,BCS16b} for a detailed study of the relative position of these manifolds. We give some details below.

When $f=g=h=0$ and $\sigma=0$ one can see that the $2$-dimensional manifolds coincide forming an ellipsoid given by:
\begin{equation}\label{eq:sfere}
z^2+\frac{b}{a+1}(x^2+y^2)=1,
\end{equation}
but in the general case ($\sigma\neq 0$) it is possible that the $2$-dimensional unstable manifold of $p_-$ and the
$2$-dimensional stable manifold of $p_+$ do not intersect.
Indeed, when the parameter $\sigma$ is not of order $\varepsilon$, the position of the $2$-dimensional manifolds is already known;
they do not intersect and the distance between them is of order $\sigma$.
This fact is easily obtained by doing another step of the normal form procedure to system~\eqref{formanormalanalitica2}
and studying the position of these manifolds in the normal form of order three.
Moreover, in this case, the  existence of Shilnikov homoclinic bifurcations is not possible.
Indeed, when the $2$-dimensional invariant manifolds do not intersect, there exist either forward or backward trapping
regions which prevent the existence of homoclinic connections (see details in \cite{DIKS13}).

For this reason, from now on, we restrict our parameters and we will take $\sigma= \mathcal{O}(\varepsilon)$.
The result that we will use in our case is already done in~\cite{BCS16b} (and also~\cite{BCS16a}).
In that work, symplectic polar coordinates were considered:
$$
x = \sqrt{2r} \cos \theta,\qquad y=\sqrt{2r} \sin \theta,
$$
and it was proven that the  $2$-dimensional invariant manifolds of $p_{\pm}$ can be parameterized
by $\theta$ and $z=\tanh (a u)$ as
$$
r=r^{\uns,\sta}(u,\theta) = \frac{a+1}{2b \cosh^2 (au)} + r^{\uns,\sta}_1(u,\theta),\qquad r^{\uns,\sta}_1(u,\theta)=\O(\varepsilon), \quad u\in [-T,T].
$$
In particular, the intersection of the two dimensional invariant manifolds of $p_{\pm}$  with the plane $z=0$
are two closed curves $\mathbf{C}^{\uns,\sta}$ that can be parameterized as:
\begin{equation}\label{eq:Cus}
\mathbf{C}^{\uns,\sta}=\{(x,y,0), \ x=\sqrt{2r^{\uns,\sta}(0,\theta)}\cos \theta, \ y=\sqrt{2r^{\uns,\sta}(0,\theta)}\sin \theta , \ \theta \in \T \}.
\end{equation}

Let us call
\begin{equation}\label{disttwodimensional}
\bar{S}^{2} (\theta,\varepsilon,\sigma) = r^{\uns}(0,\theta)-r^{\sta}(0,\theta),
\end{equation}
the radial symplectic distance between the manifolds when they meet the plane $z=0$. From Theorem 2.16 of \cite{BCS16b} one has:

\begin{theorem}[\cite{BCS16b}] \label{thm:heteroclinic2d}
Consider the radial symplectic distance $\bar{S}^{2}(\theta,\varepsilon,\sigma)$ defined
in~\eqref{disttwodimensional}.
There exist $\varepsilon_0>0$, $\sigma_0>0$ and  a complex constant $\mathcal{C}_1^*+i\mathcal{C}_2^*$,  such that for $|\sigma|\le \sigma_0 \varepsilon$
and $0<\varepsilon\le \varepsilon_0$, one has
\begin{eqnarray*}
\bar {S}^{2}(\theta,\varepsilon,\sigma) &=& \Upsilon^{[0]}(1+\O(\varepsilon))+\varepsilon^{-2-\frac{2}{a}} \mathrm{e}^{-\frac{\pi}{2a\varepsilon}}
\left [\mathcal{C}_1^* \cos (\theta - a^{-1} L_0 \log \varepsilon)\right.
\\
&&\left.+\mathcal{C}_2^* \sin (\theta - a^{-1} L_0 \log \varepsilon) + \mathcal{O}(|\log \varepsilon|^{-1})\right ]
\end{eqnarray*}
where $L_0$ is a constant depending on the terms of degree three of the singularity $X^*$ (see Remark 5.17 in
\cite{BCS16a} for details) and
$$
\Upsilon^{[0]}=\Upsilon^{[0]}(\varepsilon,\sigma)= \sigma I+\varepsilon J +\mathcal {O}_2(\varepsilon,\sigma)
$$
with $I\ne 0$ and $J$ computable constants (for details, see formulas (90) and (91) of \cite{BCS16a}).

In addition, there exists a curve
$$
\Gamma_0^*=\{(\varepsilon,\sigma), \ \sigma=\sigma^*_0(\d)=-\frac{J}{I}\d+\mathcal{O}(\d^{2})\}
$$
such that for all $0\leq\d\leq\d_0$ one has:
$$
\Upsilon^{[0]}=\Upsilon^{[0]}(\d,\sigma^*_0(\d))=0.
$$
Moreover, given any constants $c_1$, $c_2$ and $c_3>0$, there exists a curve
\begin{equation*}
\Gamma^* _\rho=\{(\varepsilon,\sigma), \ \sigma=\sigma^*_{\rho}(\d)=\sigma^*_0(\d)+\mathcal{O}(\d^{c_2}\mathrm{e}^{-\frac{c_3\pi}{2a\d}})\}
\end{equation*}
with $\rho=(c_1,c_2,c_3)$,
such that for all $0\leq\d\leq\d_0$ one has:
$$
\Upsilon^{[0]}=\Upsilon^{[0]}(\d,\sigma^*_{\rho}(\d))=c_1\d^{c_2}\mathrm{e}^{-\frac{c_3\pi}{2a\d}}.
$$

In the volume preserving case $\Upsilon^{[0]}=0$.

In addition, the constant $\mathcal{C}_1^*+i\mathcal{C}_2^*\neq 0$ for generic singularities $X^*$ and depends on the full jet of $X^*$.
\end{theorem}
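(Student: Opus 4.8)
The quantity $\bar S^{2}(\theta,\varepsilon,\sigma)=r^{\uns}(0,\theta)-r^{\sta}(0,\theta)$ is an exponentially small splitting-of-separatrices distance, so the plan is to follow the three-step scheme already used for the one-dimensional splitting in Theorem~\ref{thm:splitting1d}, now for the two-dimensional manifolds of \eqref{formanormalanalitica2}. First I would fix a good parameterization: writing the system in symplectic polar coordinates and using $z=\tanh(au)$ as vertical variable, the unperturbed ($f=g=h=0$, $\sigma=0$) heteroclinic ellipsoid becomes $r_0(u)=\frac{a+1}{2b\cosh^{2}(au)}$, and one looks for the perturbed manifolds as graphs $r^{\uns,\sta}(u,\theta)=r_0(u)+r_1^{\uns,\sta}(u,\theta)$ solving the PDE that expresses invariance of the graph under the flow. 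Because $\theta$ is a fast variable ($\dot\theta\sim -1/\varepsilon$ in the original time), the operator linearized along $r_0$ carries a small divisor, which is the source of the exponential smallness.

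\textbf{Outer expansion and the mean term.} Solving the invariance equation order by order in $\varepsilon$ on a real interval $u\in[-T,T]$ produces the outer solutions $r^{\uns,\sta}$; their difference is $\O(\varepsilon^{N})$ for every $N$, so the regular expansion only sees the contribution of the non-symmetric part of the vector field (the $\sigma x,\sigma y$ terms together with the degree-three terms of the normal form). Averaging this contribution along $r_0$ yields the polynomially small ``mean'' term $\Upsilon^{[0]}(\varepsilon,\sigma)=\sigma I+\varepsilon J+\O_2(\varepsilon,\sigma)$, with $I\ne0$ and $J$ the explicit constants/integrals referred to in formulas (90)--(91) of \cite{BCS16a}; in the volume-preserving case the divergence-free structure forces this averaged term to vanish, so $\Upsilon^{[0]}\equiv0$ there.

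\textbf{Inner equation and matching.} To capture the beyond-all-orders oscillatory part I would complexify $u$. The unperturbed separatrix $r_0$ has singularities at $u=\pm i\pi/(2a)$, so I would continue the parameterizations of $W^{\uns}(p_-)$ and $W^{\sta}(p_+)$ to complex $u$ up to distance $\O(\varepsilon|\log\varepsilon|)$ of $u=i\pi/(2a)$, controlling them there by a fixed-point argument in suitable weighted norms, and introduce the inner variable $U=\varepsilon^{-1}(u-i\pi/(2a))$. At leading order the rescaled (inner) equation is $\varepsilon$-independent; its stable and unstable solutions differ by an exponentially small amount whose amplitude and phase are governed by ``Stokes constants'' of the inner equation, and matching these with the outer solutions produces the prefactor $\varepsilon^{-2-\frac{2}{a}}\mathrm{e}^{-\frac{\pi}{2a\varepsilon}}$, the coefficients $\mathcal{C}_1^{*},\mathcal{C}_2^{*}$, and the logarithmic phase shift $a^{-1}L_0\log\varepsilon$ (with $L_0$ determined by the degree-three terms). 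Genericity of $\mathcal{C}_1^{*}+i\mathcal{C}_2^{*}\ne0$ becomes a non-vanishing condition on the relevant solution of the inner equation, which depends on the full jet of $X^{*}$. Evaluating the matched parameterizations at $u=0$ (i.e.\ $z=0$) gives the stated asymptotics for $\bar S^{2}$ with error $\O(|\log\varepsilon|^{-1})$.

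\textbf{The curves.} Finally, $\Gamma_0^{*}$ and $\Gamma_\rho^{*}$ come from the implicit function theorem applied to the real-analytic function $\Upsilon^{[0]}$: since $\partial_\sigma\Upsilon^{[0]}(0,0)=I\ne0$, the equation $\Upsilon^{[0]}(\varepsilon,\sigma)=0$ has a unique analytic solution $\sigma=\sigma_0^{*}(\varepsilon)=-\frac{J}{I}\varepsilon+\O(\varepsilon^{2})$, and for $\rho=(c_1,c_2,c_3)$ with $c_3>0$ the equation $\Upsilon^{[0]}(\varepsilon,\sigma)=c_1\varepsilon^{c_2}\mathrm{e}^{-\frac{c_3\pi}{2a\varepsilon}}$ is solved, again by the implicit function theorem, by $\sigma=\sigma_\rho^{*}(\varepsilon)=\sigma_0^{*}(\varepsilon)+\O(\varepsilon^{c_2}\mathrm{e}^{-\frac{c_3\pi}{2a\varepsilon}})$. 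The main obstacle is clearly the inner-equation analysis and the matching: continuing the invariant manifolds into the complex strip, controlling the inner solutions and the Stokes constants, and getting error bounds sharp enough to isolate the dominant exponentially small term; the rest is either soft (the implicit function theorem) or a long but routine perturbative computation.
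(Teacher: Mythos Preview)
This theorem is not proven in the present paper: it is quoted from \cite{BCS16b} (see the attribution in the theorem header and the sentence ``From Theorem 2.16 of \cite{BCS16b} one has'' immediately preceding it), so there is no in-paper proof to compare your proposal against. Your sketch is a reasonable summary of the exponentially-small-splitting machinery developed in \cite{BCS16a,BCS16b} (outer parameterization of the manifolds as graphs over $(u,\theta)$, complexification up to the singularities $u=\pm i\pi/(2a)$, inner equation and Stokes constants, matching to extract the $\varepsilon^{-2-2/a}\mathrm{e}^{-\pi/(2a\varepsilon)}$ prefactor and the $a^{-1}L_0\log\varepsilon$ phase, implicit function theorem for $\Gamma_0^*$ and $\Gamma_\rho^*$), and it is consistent with how the paper describes those references; but as far as this paper is concerned the result is simply imported.
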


We emphasize that, since the unfoldings are analytic, the distance between the invariant manifolds is exponentially small for adequate values of the parameters.
This fact is intrinsic to the analytic case and lies in the field of singular perturbation theory.
Conversely, when we are in the finitely many differentiable case, the classical perturbation theory can be applied to compute the distances
$S^1$ and $\bar{S}^2$. They both will be, generically, $\mathcal{O}(\varepsilon^k)$ for some $k>0$, for values of $\sigma$ in an adequate curve.

Using the notation we have already introduced, we can write a more quantitative version of Theorem \ref{qualitativeTheorem} for the general case:

\begin{theorem}\label{Theoremdissipative} [General case]
Consider system \eqref{formanormalanalitica2} and assume
the open conditions $0<a<2 $ and  $b>0$ and the generic condition $\mathcal{C}^* \ne 0$, where
$\mathcal{C}^*$ is given in Theorem \ref{thm:splitting1d},
on the initial singularity $X^*$.

\begin{itemize}
\item
In the $(\varepsilon,\sigma)$ plane, with $\varepsilon$ small enough, there exists an immerse curve $\cc^*$, which lies in a wedge-shaped neighbourhood
$\mathcal{W}_1^*$ of the curve
$\Gamma_0^*$ given in Theorem \ref{thm:heteroclinic2d} of a width
that is at most $2c_1\d ^{-2-\frac{2}{a}}\mathrm{e}^{-\frac{\pi}{2a \varepsilon}}$, with either $c_1=\sqrt{(\mathcal{C}_1^*)^2+(\mathcal{C}_2^*)^2}$
if $\mathcal{C}_1^* + i \mathcal{C}_2^* \neq 0$ or $c_1>0$ if $\mathcal{C}_1^* + i \mathcal{C}_2^* = 0$,
such that for $(\varepsilon,\sigma) \in \cc^*$, any analytic unfolding
$X_{\varepsilon,\sigma}$ has a Shilnikov homoclinic orbit to the equilibrium point $p_{+}$.
\item
Assume moreover the generic condition on the singularity $X^*$: $\mathcal{C}_1^*\ne 0$ or $\mathcal{C}_2^*\ne 0$,
where $\mathcal{C}_1^*+i\mathcal{C}_2^*$  is given in Theorem~\ref{thm:heteroclinic2d}.
Let $0<\kappa<1$ be any constant.
Take any curve $\Gamma^* _\rho$, $\rho=(c_1,c_2,c_3)$ as given in  Theorem~\ref{thm:heteroclinic2d} with
$$|c_1|\leq \kappa \sqrt{(\mathcal{C}_1^*)^2+(\mathcal{C}_2^*)^2},\qquad c_2\ge -2-\frac{2}{a}, \qquad c_3\ge 1$$
and $\varepsilon$ small enough.
Let ${\displaystyle{\mathcal{W}_2^* =\bigcup_{\rho} \Gamma_\rho^*}}$.
Then
\begin{enumerate}
\item
For any $(\varepsilon,\sigma)\in \mathcal{W}_2^*$ there are, at least, two heteroclinic orbits from $p_-$ to $p_+$.
\item
For any curve $\Gamma^* _\rho \subset \mathcal{W}_2^*$, there exists a sequence of parameter values $\{\varepsilon_n\}$,
with $\varepsilon_n\to 0$ as $n\to \infty$ and  $(\varepsilon_n, \sigma^*_{\rho}(\varepsilon_n))\in \mathcal{W}_2^*$,
such that the vector field $X_{\varepsilon_n, \sigma^*_{\rho}(\varepsilon_n)}$ has
a Shilnikov homoclinic orbit to the equilibrium point $p_{+}$.
\end{enumerate}
\end{itemize}
\end{theorem}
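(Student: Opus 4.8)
The plan is to reduce the existence of a Shilnikov homoclinic orbit to $p_+$ to the vanishing of a single scalar function $\mathcal{H}(\varepsilon,\sigma)$, assembled from the two splitting functions $S^1$ and $\bar S^2$ supplied by Theorems~\ref{thm:splitting1d} and~\ref{thm:heteroclinic2d}, and then to solve $\mathcal{H}=0$ by exploiting the fast rotation $\dot\theta\sim-\varepsilon^{-1}$ of system~\eqref{formanormalanalitica2}. First I would record that, linearizing~\eqref{formanormalanalitica2} at $p_+=(0,0,1)$, the $(x,y)$-block has eigenvalues $(\sigma-a)\pm i\varepsilon^{-1}$ and the $z$-direction eigenvalue is $2$; hence, since $0<a<2$ and $\sigma$ is small, $p_+$ is a saddle-focus with $\lambda=2>\varrho=a-\sigma>0$, so by Remark~\ref{Rem0a2} the expansivity condition holds and any homoclinic orbit to $p_+$ we produce is automatically of Shilnikov type. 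Symmetrically, $p_-=(0,0,-1)$ is a saddle-focus whose two-dimensional unstable manifold carries a spiral of rotation speed $\varepsilon^{-1}$ and expansion rate $a+\sigma>0$, while its one-dimensional stable manifold has contraction rate $2$.

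\textbf{Step 1: the Shilnikov loop.} I would track the branch of $W^u(p_+)$ that shadows the broken heteroclinic~\eqref{W1}. Since $\mathcal{C}^*\neq0$, Theorem~\ref{thm:splitting1d} gives $S^1(\varepsilon,\sigma)\neq0$ for $\varepsilon$ small, so this branch genuinely misses $p_-$ and enters a fixed box $U_-$ about $p_-$ at distance comparable with $S^1$ from the one-dimensional stable manifold $W^s(p_-)$. A saddle-focus passage analysis inside $U_-$ then sends it to an exit point whose distance to the two-dimensional unstable manifold $W^u(p_-)$ is comparable with $(S^1)^{2/(a+\sigma)}$ (contraction rate $2$ over a passage time $\tfrac{1}{a+\sigma}\log\tfrac1{S^1}$), and whose angular coordinate has advanced by $\tfrac1\varepsilon\cdot\tfrac{1}{a+\sigma}\log\tfrac1{S^1}=\tfrac{\pi}{2(a+\sigma)\varepsilon^{2}}+\O(\varepsilon^{-1})$, using $\log\tfrac1{S^1}=\tfrac{\pi}{2\varepsilon}(1+o(1))$ from Theorem~\ref{thm:splitting1d}. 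Flowing up along $W^u(p_-)$ for an $O(1)$ time, the orbit meets $z=0$ at a point $Q(\varepsilon,\sigma)$ at distance $\O\bigl((S^1)^{2/(a+\sigma)}\bigr)$ from the curve $\mathbf{C}^{\uns}$ of~\eqref{eq:Cus}, at an angular parameter $\theta_Q(\varepsilon,\sigma)=\tfrac{\pi}{2(a+\sigma)\varepsilon^{2}}+\O(\varepsilon^{-1})$. The loop closes into a homoclinic orbit to $p_+$ exactly when $Q\in W^s(p_+)$, i.e.\ when $Q$ lies on $\mathbf{C}^{\sta}$; as $Q$ is exponentially close to $\mathbf{C}^{\uns}$, recalling~\eqref{disttwodimensional} this is equivalent to the vanishing of
$$\mathcal{H}(\varepsilon,\sigma):=\bar S^{2}\bigl(\theta_Q(\varepsilon,\sigma),\varepsilon,\sigma\bigr)+\mathcal{E}(\varepsilon,\sigma),$$
where $\mathcal{E}$ collects the shadowing errors and obeys $|\mathcal{E}|=\O\bigl((S^1)^{2/(a+\sigma)}\bigr)\sim\mathrm{e}^{-\pi/((a+\sigma)\varepsilon)}$; since $\sigma<a$ gives $a+\sigma<2a$, this is exponentially smaller than the oscillatory amplitude $\varepsilon^{-2-\frac{2}{a}}\mathrm{e}^{-\pi/(2a\varepsilon)}$ of $\bar S^2$, so $\mathcal{E}$ is a negligible perturbation. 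I expect this tracking---carried out rigorously and uniformly in the singular parameter $\varepsilon$, matching the outer estimates of~\cite{BCS13,BCS16a,BCS16b} with a Shilnikov-type passage near $p_-$ in the spirit of~\cite{DIKS13}---to be the main obstacle.

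\textbf{Step 2: heteroclinic orbits and the wedges.} Independently, each simple zero in $\theta$ of $\bar S^2(\cdot,\varepsilon,\sigma)$ produces a point on both $\mathbf{C}^{\uns}$ and $\mathbf{C}^{\sta}$, hence on $W^u(p_-)\cap W^s(p_+)$, hence a heteroclinic orbit from $p_-$ to $p_+$. By Theorem~\ref{thm:heteroclinic2d} the $\theta$-mean of $\bar S^2$ is $\Upsilon^{[0]}(1+\O(\varepsilon))$ and its oscillatory part has amplitude $\sqrt{(\mathcal{C}_1^*)^2+(\mathcal{C}_2^*)^2}\,\varepsilon^{-2-\frac{2}{a}}\mathrm{e}^{-\pi/(2a\varepsilon)}(1+o(1))$. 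I would set $\mathcal{W}_2^*=\bigcup_\rho\Gamma_\rho^*$ over the admissible $\rho=(c_1,c_2,c_3)$ with $|c_1|\le\kappa\sqrt{(\mathcal{C}_1^*)^2+(\mathcal{C}_2^*)^2}$, $c_2\ge-2-\tfrac{2}{a}$, $c_3\ge1$; on it $|\Upsilon^{[0]}|=|c_1|\varepsilon^{c_2}\mathrm{e}^{-c_3\pi/(2a\varepsilon)}\le\kappa\sqrt{(\mathcal{C}_1^*)^2+(\mathcal{C}_2^*)^2}\,\varepsilon^{-2-\frac{2}{a}}\mathrm{e}^{-\pi/(2a\varepsilon)}$, strictly below the amplitude for $\varepsilon$ small since $\kappa<1$, so $\bar S^2(\cdot,\varepsilon,\sigma)$ takes both signs and thus has at least two zeros; this gives item~(1). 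Enlarging, $\mathcal{W}_1^*\supset\mathcal{W}_2^*$ is the exponentially thin wedge about $\Gamma_0^*$ where $|\Upsilon^{[0]}|$ stays below (essentially) the full amplitude---of the width stated in the theorem, with $c_1=\sqrt{(\mathcal{C}_1^*)^2+(\mathcal{C}_2^*)^2}$, or, when $\mathcal{C}_1^*+i\mathcal{C}_2^*=0$, any wedge exponentially thin in $\varepsilon$ (using then only that $\bar S^2(\cdot,\varepsilon,\sigma)\not\equiv0$, whose $\theta$-mean $\approx\Upsilon^{[0]}\approx0$ forces a sign change); via $\Upsilon^{[0]}=\sigma I+\varepsilon J+\O_2$ with $I\ne0$ this is the region $|\sigma-\sigma_0^*(\varepsilon)|=\O\bigl(\varepsilon^{-2-\frac{2}{a}}\mathrm{e}^{-\pi/(2a\varepsilon)}\bigr)$.

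\textbf{Step 3: solving $\mathcal{H}=0$.} Combining Step~1 with Theorem~\ref{thm:heteroclinic2d}, on $\mathcal{W}_1^*$
$$\mathcal{H}(\varepsilon,\sigma)=\Upsilon^{[0]}+\varepsilon^{-2-\frac{2}{a}}\mathrm{e}^{-\frac{\pi}{2a\varepsilon}}\Bigl[\mathcal{C}_1^*\cos\bigl(\theta_Q-a^{-1}L_0\log\varepsilon\bigr)+\mathcal{C}_2^*\sin\bigl(\theta_Q-a^{-1}L_0\log\varepsilon\bigr)+o(1)\Bigr]$$
with $|\Upsilon^{[0]}|$ below the amplitude. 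Because $|\partial_\varepsilon\theta_Q|\sim\pi(a+\sigma)^{-1}\varepsilon^{-3}$ and $|\partial_\sigma\theta_Q|\sim\tfrac12\pi(a+\sigma)^{-2}\varepsilon^{-2}$ both tend to $\infty$ as $\varepsilon\to0$, the phase $\theta_Q$ sweeps $\T$ infinitely often inside $\mathcal{W}_1^*$ while the amplitude remains above $|\Upsilon^{[0]}|$, so $\mathcal{H}$ changes sign; moreover $0$ is a regular value of $\mathcal{H}$ at its zeros, since there $\partial_\varepsilon\mathcal{H}$ is dominated by $\partial_\theta\bar S^2(\theta_Q,\varepsilon,\sigma)\,\partial_\varepsilon\theta_Q$, which is nonzero at the generically simple zeros of $\bar S^2(\cdot,\varepsilon,\sigma)$. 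Hence $\cc^*:=\mathcal{H}^{-1}(0)\cap\mathcal{W}_1^*$ is an immersed real-analytic curve, possibly with infinitely many components accumulating at $\varepsilon=0$ and each confined to the exponentially thin wedge $\mathcal{W}_1^*$; by Step~1, for every $(\varepsilon,\sigma)\in\cc^*$ the field $X_{\varepsilon,\sigma}$ has a Shilnikov homoclinic orbit to $p_+$, which is the first bullet. Finally, restricting $\mathcal{H}$ to any admissible $\Gamma_\rho^*\subset\mathcal{W}_2^*$, the function $\varepsilon\mapsto\mathcal{H}(\varepsilon,\sigma_\rho^*(\varepsilon))$ still has oscillatory amplitude above $|\Upsilon^{[0]}|$ and still has $\theta_Q$ sweeping $\T$ as $\varepsilon\to0$, so by the intermediate value theorem it vanishes along a sequence $\varepsilon_n\to0$; this yields item~(2) and completes the proof.
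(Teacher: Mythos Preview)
Your approach is correct and shares the same core with the paper's: both hinge on the passage-tracking result you call Step~1, which the paper isolates as a separate Proposition~\ref{prop:2crossings} giving $\mathrm{dist}(Q_0,\mathbf{C}^{\uns})\le \bar C_1\varepsilon^{2-2/a}\mathrm{e}^{-\pi/(a\varepsilon)}$ and $\theta_0\ge d/\varepsilon^2$; your estimates $(S^1)^{2/(a+\sigma)}$ and $\theta_Q\sim\pi/(2(a+\sigma)\varepsilon^2)$ match these after substituting $\sigma=O(\varepsilon)$. The paper's proof of that proposition is not a linearised Shilnikov passage but a more hands-on argument (a Gronwall bound while shadowing $W^{\sta}(p_-)$, then an ``escape through nested cylinders/ellipsoids'' argument, then control of the first integral $H(\rho,z)=\rho^{2/a}[1-z^2-\tfrac{b}{a+1}\rho^2]$ along the orbit), so your expectation that this is the main technical obstacle is accurate.

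The organisational differences are twofold. For the first bullet the paper uses a simpler intermediate-value argument in the $\sigma$-direction rather than your regular-value argument on $\mathcal{H}$: on the boundary curves $\Gamma^*_{\rho_\pm}$ of $\mathcal{W}_1^*$ one has $\bar S^2$ of constant sign, so $\mathbf{C}^{\uns}$ lies strictly outside (resp.\ inside) $\mathbf{C}^{\sta}$; since by Proposition~\ref{prop:2crossings} $Q_0$ is exponentially closer to $\mathbf{C}^{\uns}$ than the gap between the two curves, $Q_0$ switches from outside to inside $\mathbf{C}^{\sta}$ as $\sigma$ crosses the wedge, forcing $Q_0\in\mathbf{C}^{\sta}$ for some $\sigma^s(\varepsilon)$. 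This is more robust in the degenerate case $\mathcal{C}_1^*+i\mathcal{C}_2^*=0$ (where your oscillation argument loses traction) and does not require the differentiable control on $\theta_Q$ that your regular-value step needs; on the other hand your route delivers the ``immersed curve'' conclusion more explicitly than the paper, which is informal on that point. For item~(2) of the second bullet the paper does not argue directly: it verifies hypotheses H1--H5 of a black-box result quoted from~\cite{DIKS13} (Theorem~\ref{thm:diks} here), whose content is essentially your Step~3 restricted to a single curve $\Gamma^*_\rho$. The heteroclinic item~(1) is handled identically in both proofs.
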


The precise results in the volume preserving case are collected in the following result:
\begin{theorem}\label{Theoremconservative} [Volume preserving case]
Consider system~\eqref{formanormalanalitica2} and assume that it is volume preserving.
In particular, $a=1$ and $\nu=0$.
Assume moreover the open condition  $b>0$ and the generic conditions $\mathcal{C}^* \ne 0$,  $(\mathcal{C}_1^*)^2+(\mathcal{C}_2^*)^2\ne 0$ on the initial singularity $X^*$.
\begin{enumerate}
\item
Then, for $\varepsilon$ small enough there are, at least, two heteroclinic orbits from $p_-$ to $p_+$.
\item
There exists a sequence of parameter values $\{\varepsilon_n\}$,
with $\varepsilon_n >0$ and $\varepsilon_n\to 0$ as $n\to \infty$ such that the
vector field  $X_{\varepsilon_n}$ has a Shilnikov homoclinic orbit to the equilibrium point  $p_{+}$.
\end{enumerate}
\end{theorem}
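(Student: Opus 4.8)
The strategy is to feed the two exponentially small splitting formulas of Theorems~\ref{thm:splitting1d} and~\ref{thm:heteroclinic2d}, specialised to the volume preserving regime $\sigma=0$, $a=1$ (where moreover $\Upsilon^{[0]}\equiv 0$), into the classical saddle--focus return mechanism at $p_+$. I first establish the two heteroclinic orbits of item~1. With $a=1$ and $\Upsilon^{[0]}\equiv 0$, Theorem~\ref{thm:heteroclinic2d} gives
\[
\bar{S}^{2}(\theta,\varepsilon,0)=\varepsilon^{-4}\,\mathrm{e}^{-\pi/(2\varepsilon)}\,[\,R\cos(\theta-L_0\log\varepsilon-\phi_0)+\mathcal{O}(|\log\varepsilon|^{-1})\,],\qquad R:=\sqrt{(\mathcal{C}_1^*)^2+(\mathcal{C}_2^*)^2}>0,
\]
for a suitable constant $\phi_0$. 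Since $R>0$, for $\varepsilon$ small the bracketed function of $\theta\in\mathbb{T}$ is a cosine of amplitude bounded away from zero plus a term tending to zero, hence has exactly two zeros, each simple (near a zero the bracket forces $\cos(\theta-L_0\log\varepsilon-\phi_0)=o(1)$, so $|\sin(\theta-L_0\log\varepsilon-\phi_0)|$ is close to $1$ and the $\theta$--derivative does not vanish). By the implicit function theorem each such zero is a transversal intersection point of the closed curves $\mathbf{C}^{\uns}=W^u(p_-)\cap\{z=0\}$ and $\mathbf{C}^{\sta}=W^s(p_+)\cap\{z=0\}$, and therefore corresponds to a transversal heteroclinic orbit of $X_\varepsilon$ from $p_-$ to $p_+$.

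For item~2 I would track the branch of the one--dimensional unstable manifold $W^u(p_+)$ that coincides with the heteroclinic segment $W_1$ of~\eqref{W1} at the truncated level. By Theorem~\ref{thm:splitting1d} the hypothesis $\mathcal{C}^*\neq0$ forces $S^1(\varepsilon,0)\neq0$ for $\varepsilon$ small, so this branch no longer reaches $p_-$: it crosses $\{z=0\}$ at distance of order $|S^1(\varepsilon,0)|$ from the $z$--axis, then performs a long, fast--winding passage near the saddle--focus $p_-$ (taking a ``time'' $\sim\varepsilon^{-1}$, with angular velocity $\sim\varepsilon^{-1}$) and leaves its neighbourhood shadowing the two--dimensional manifold $W^u(p_-)$. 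Transporting this orbit back to the section $\{z=0\}$ produces a point $Q(\varepsilon)$ with a rapidly rotating angular coordinate $\theta^u(\varepsilon)$, and the signed radial distance from $Q(\varepsilon)$ to $\mathbf{C}^{\sta}$ is a \emph{Shilnikov distance} $\mathcal{S}(\varepsilon)$ whose vanishing is equivalent to the existence of an orbit of $X_\varepsilon$ homoclinic to $p_+$. Following the scheme of~\cite{DIKS13}, I would decompose $\mathcal{S}(\varepsilon)$ as the radial distance from $Q(\varepsilon)$ to the point of $\mathbf{C}^{\uns}$ at the same angle --- produced by the hyperbolic transit past $p_-$, hence of order $|S^1(\varepsilon,0)|^{\lambda/\varrho}=\mathcal{O}(|S^1(\varepsilon,0)|^{2})=\mathcal{O}(\mathrm{e}^{-\pi/\varepsilon})$ --- plus the two--dimensional splitting $\bar{S}^{2}(\theta^u(\varepsilon),\varepsilon,0)$ evaluated at the landing angle.

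The argument then concludes as follows. With $a=1$ the amplitude $\varepsilon^{-4}\mathrm{e}^{-\pi/(2\varepsilon)}$ of the oscillatory term of $\bar{S}^{2}$ dominates the $\mathcal{O}(\mathrm{e}^{-\pi/\varepsilon})$ contribution of the passage near $p_-$, so $\mathcal{S}(\varepsilon)=0$ is equivalent to $\cos(\theta^u(\varepsilon)-L_0\log\varepsilon-\phi_0)=o(1)$. Since $\varepsilon\mapsto\theta^u(\varepsilon)-L_0\log\varepsilon$ is strictly monotone with derivative blowing up as $\varepsilon\downarrow0$ (the total angle accumulated during the transit near $p_-$ behaves like $\varepsilon^{-2}$), this occurs along a sequence $\varepsilon_n\to0$ at which $\mathcal{S}$ has a simple zero; for each such $\varepsilon_n$ the vector field $X_{\varepsilon_n}$ has a homoclinic orbit to $p_+$. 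Since $p_+$ is of saddle--focus type and $a=1<2$ forces the expansivity inequality $\lambda>\varrho$ (Remark~\ref{Rem0a2}), these are Shilnikov homoclinic orbits, and hence chaotic dynamics follow.

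The main obstacle is the rigorous construction and estimate of the Shilnikov distance $\mathcal{S}(\varepsilon)$: one must glue the two beyond--all--orders splitting formulas to a careful analysis of the long, fast--rotating transit of the one--dimensional manifold past the saddle--focus $p_-$, keep uniform control of the angular dynamics over a ``time'' interval that grows like $\varepsilon^{-1}$, and then extract transversal zeros accumulating at $\varepsilon=0$. It is precisely the exponentially small, \emph{analytic} nature of these estimates --- unavailable to the flat--perturbation methods of~\cite{BroerV} --- that makes the argument work in the analytic category.
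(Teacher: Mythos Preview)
Your plan is essentially the paper's proof. The heteroclinic part (item~1) is identical: with $\Upsilon^{[0]}\equiv 0$ the two transversal zeros of $\bar S^2(\theta,\varepsilon,0)$ follow directly from Theorem~\ref{thm:heteroclinic2d}. For item~2 the paper organises the same mechanism you describe into two modules: (i) a black-box result, Theorem~\ref{thm:diks} (from \cite{DIKS13}), that packages the ``Shilnikov distance'' argument into five checkable hypotheses H1--H5 and outputs the sequence $\varepsilon_n\to 0$; and (ii) Proposition~\ref{prop:2crossings}, which supplies precisely the two estimates you flag as the main obstacle --- the bound $\mathrm{dist}(Q_0,\mathbf{C}^{\uns})\le \bar C_1\,\varepsilon^{2-2/a}\mathrm{e}^{-\pi/(a\varepsilon)}$ (your ``$|S^1|^2$'' in the $a=1$ case) and the lower bound $\theta_0\ge d/\varepsilon^2$ on the accumulated angle. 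The verification of H1--H4 is immediate from Theorems~\ref{thm:splitting1d} and~\ref{thm:heteroclinic2d} (with $S^2_f\equiv 0$ since $\Upsilon^{[0]}=0$), and H5 uses the angle bound from Proposition~\ref{prop:2crossings}. One caution on notation: the exponent governing the passage near $p_-$ is $2/a$ (coming from the near-integral $H(\rho,z)=\rho^{2/a}[1-z^2-\frac{b}{a+1}\rho^2]$), not the ratio ``$\lambda/\varrho$'' in the sense of Remark~\ref{Rem0a2}; the numerical value $2$ is correct for $a=1$, but keep the labels straight.
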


Observe that  Theorem \ref{qualitativeTheorem} is just a corollary of theorems~\ref{Theoremdissipative} and~\ref{Theoremconservative}
undoing the changes of variables from System~\eqref{formanormalanalitica2} to System~\eqref{formanormalanalitica0}.
\begin{remark}
Recall that condition $0<a<2$ is equivalent to~\eqref{opencondition2}.
As we pointed out in Remark~\ref{Rem0a2}, to prove the existence of homoclinic orbits to saddle-focus equilibrium points we only need to assume
$a>0$. However, the condition $0<a<2$ is necessary to guarantee that such homoclinic orbits are of Shilnikov type.
\end{remark}

\section{Proof of Theorems \ref{Theoremdissipative} and \ref{Theoremconservative}}\label{sec:proof}

The proof of these two theorems will have some common setting: to prove the existence of homoclinic orbits to the north point
$p_+$ we need to control its global $1$-dimensional unstable manifold $W^\uns(p_+)$
and more concretely its intersections with the plane $z=0$.

Theorem \ref{thm:splitting1d} gives us information about the ``first'' intersection of $W^\uns(p_+)$ with the plane $\{z=0\}$ and the distance
$S^1(\d,\sigma)$,
between $W^\uns(p_+)$ and the $1$-dimensional stable manifold of the south point $p_-$; $W^\sta(p_-)$ (see Figure~\ref{fig:splitting}).
Using the precise results in Theorems~\ref{thm:splitting1d} and~\ref{thm:heteroclinic2d} we will prove that $W^\uns(p_+)$
intersects again the plane $z=0$ in a point $Q_0$ which is ``close enough'' to the $2$-dimensional manifold $W^\uns(p_-)$.
The goal is to get a sharp bound for the distance between $Q_0$ and the curve $\mathbf{C}^\uns=W^\uns(p_-)\cap \{z=0\}$ (see Figure \ref{fig_segundocorte}).

\begin{figure}[t!]
\subfloat{
\includegraphics[width=0.45\textwidth]{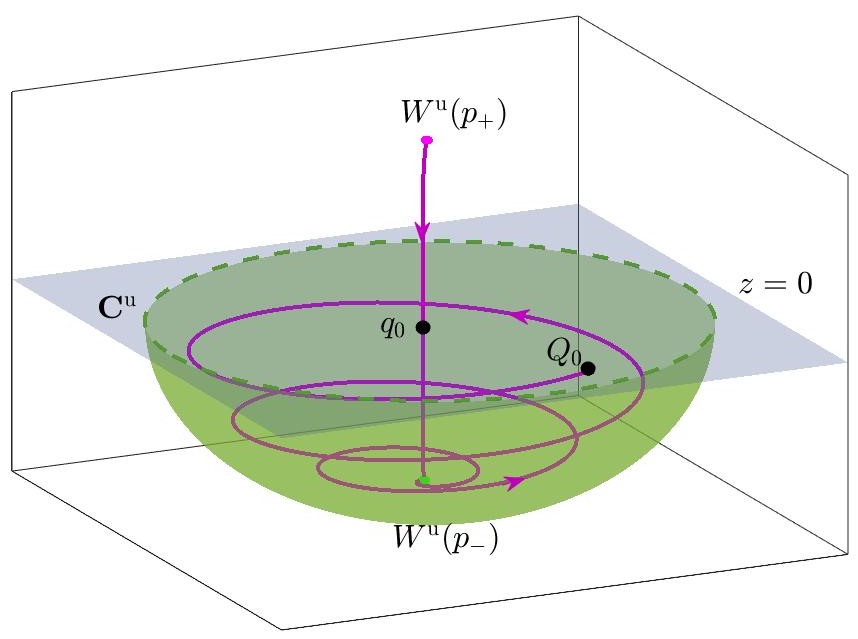}}
\hspace{0.8cm}
\subfloat{
\includegraphics[width=0.45\textwidth]{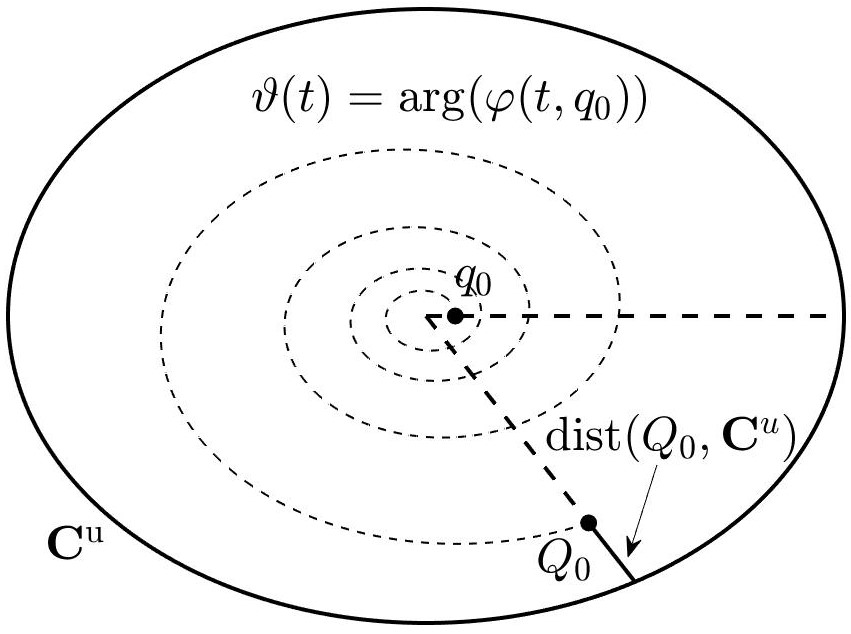}
}
\caption{The points $q_0$ and $Q_0$ correspond to the first and second intersections of $W^\uns(p_+)$ with $z=0$.
The curve $\mathbf{C}^\uns$ is given by the first intersection of $W^\uns(p_-)$ with $z=0$.
We need to get appropriate bounds for the angle $\theta_0=\vartheta(t)$ and for the distance between $Q_0$ and $\mathbf{C}^\uns$.
On the left the three dimensional figure, on the right the plane $\{z=0\}$.
}
\label{fig_segundocorte}
\end{figure}

More explicitly, we will prove the following key result:
\begin{proposition}\label{prop:2crossings}
Consider System~\eqref{formanormalanalitica2} and let $\varphi(t;q)$ be its flow. Then
$W^{\uns}(p_+)$ cross the plane $z=0$ at least twice.
Let us  call $q_0$ and $Q_0$ the first and the second intersection points.
Let
$\tau>0$ be such that $Q_0=\varphi(\tau;q_0)$. Then we have:
\begin{enumerate}
\item
The distance between $Q_0$ and the unstable curve $\mathbf{C}^\uns$ given in \eqref{eq:Cus} is bounded from above by
$$
\mathrm{dist}(Q_0,\mathbf{C}^\uns)\leq \bar C_1 \varepsilon ^{2-\frac{2}{a}}\mathrm{e}^{-\frac{\pi}{a \varepsilon}},
$$
for some constant $\bar C_1 >0$.
\item
Consider the $\mathcal{C}^\infty$ function $\vartheta(t)$ giving the argument of
$\varphi(t;q_0)$ and define 
$\theta_0=\vartheta(\tau)$.
Then there exists a constant $d>0$ such that  $\theta _0\geq \frac{d}{\d^2}$.
\end{enumerate}
\end{proposition}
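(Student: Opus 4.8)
The plan is to track the branch of $W^{\uns}(p_+)$ that enters the region $\{z<1\}$, from its departure from $p_+$ until its second meeting with $\{z=0\}$, decomposing the trajectory into three legs: (i) the descent from $p_+$ into a fixed small neighbourhood $U_-$ of the saddle-focus $p_-$, during which the orbit shadows the one-dimensional manifold $W^{\sta}(p_-)$; (ii) the passage through $U_-$; and (iii) the ascent from $U_-$ back up to $\{z=0\}$, during which the orbit shadows the two-dimensional manifold $W^{\uns}(p_-)$. Only Theorem~\ref{thm:splitting1d} and the local analysis near $p_-$ are needed; we will not use Theorem~\ref{thm:heteroclinic2d} here.

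For leg (i): when $f=g=h=0$ and $\sigma=0$ the branch in question is exactly the heteroclinic segment $W_1$ of~\eqref{W1}, so for the true system it is an exponentially small perturbation of $W_1$ and does cross $\{z=0\}$, at a first point $q_0$; by Theorem~\ref{thm:splitting1d} the distance from $q_0$ to $W^{\sta}(p_-)\cap\{z=0\}$ equals $|S^1(\varepsilon,\sigma)|=\varepsilon^{-1+a}\mathrm{e}^{-\pi/2\varepsilon}\mathrm{e}^{-\pi c_0/2}\,|\mathcal{C}^*+\O(|\log\varepsilon|^{-1})|$. Along $W^{\sta}(p_-)$, and hence along $\varphi(t;q_0)$, the $z$-component satisfies $\dot z\approx z^2-1$ (the $x,y$-terms and $\sigma=\O(\varepsilon)$ being negligible there), so this manifold joins $\{z=0\}$ to $U_-$ in time $\O(1)$; a Gronwall estimate then shows that $\varphi(t;q_0)$ stays $\O(|S^1|)$-close to $W^{\sta}(p_-)$ and enters $U_-$ with its coordinate transverse to $W^{\sta}(p_-)$ still of size $\O(|S^1|)$ and its coordinate transverse to $W^{\uns}(p_-)$ of size $\O(1)$.

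Leg (ii) is the technical core. In $U_-$ one reduces~\eqref{formanormalanalitica2} to a linear, or sufficiently accurate partial normal, form; the eigenvalues at $p_-$ are $-2$ (one-dimensional stable) and $(a+\sigma)\pm i\varepsilon^{-1}$ (two-dimensional unstable focus), so the saddle ratio is $2/(a+\sigma)$, which is $>1$ by~\eqref{opencondition2} for $\varepsilon$ small. One then estimates the transition map from an entrance section transverse to $W^{\sta}(p_-)$ to an exit section transverse to $W^{\uns}(p_-)$: an orbit entering with transverse-to-$W^{\sta}(p_-)$ coordinate $\eta_{\mathrm{in}}=\O(|S^1|)$ and transverse-to-$W^{\uns}(p_-)$ coordinate $\O(1)$ leaves $U_-$ after a time $\sim (a+\sigma)^{-1}\log(1/|\eta_{\mathrm{in}}|)\sim \pi/(2a\varepsilon)$, with transverse-to-$W^{\uns}(p_-)$ coordinate of size $\sim |\eta_{\mathrm{in}}|^{2/(a+\sigma)}$. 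Using $|S^1|\sim\varepsilon^{-1+a}\mathrm{e}^{-\pi/2\varepsilon}$ and $\sigma=\O(\varepsilon)$ one then checks that $|\eta_{\mathrm{in}}|^{2/(a+\sigma)}=\varepsilon^{2-2/a}\mathrm{e}^{-\pi/(a\varepsilon)}\,\O(1)$. The hard part will be carrying out this transition-map estimate with constants uniform in $\varepsilon$, because the imaginary part $\varepsilon^{-1}$ of the eigenvalues diverges as $\varepsilon\to0$, so the standard Shilnikov $\lambda$-lemma bounds cannot be quoted verbatim and must be re-derived, controlling the interaction of the nonlinear remainder with the fast rotation.

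For leg (iii) and the conclusion: from the exit section, along $W^{\uns}(p_-)$ the $z$-component obeys $\dot z\approx a(1-z^2)$ (using the parametrisation of $W^{\uns}(p_-)$ recalled before Theorem~\ref{thm:heteroclinic2d}), so $z$ climbs from $\approx -1$ to $0$ in time $\O(1)$ and the coordinate transverse to $W^{\uns}(p_-)$ is distorted only by a bounded factor; hence the point $Q_0$ where $\varphi(t;q_0)$ meets $\{z=0\}$ a second time satisfies $\mathrm{dist}(Q_0,\mathbf{C}^{\uns})\le \bar C_1\,\varepsilon^{2-2/a}\mathrm{e}^{-\pi/(a\varepsilon)}$ (that $Q_0$ is the second crossing follows because $z$ is strictly monotone on legs (i) and (iii) and stays near $-1$ on leg (ii)), proving item~1. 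For item~2, along the whole orbit $\dot\vartheta=-\varepsilon^{-1}+\O(1)$, so $\vartheta$ is monotone with $|\dot\vartheta|\sim\varepsilon^{-1}$; legs (i) and (iii) take time $\O(1)$ while leg (ii) takes time $\sim\pi/(2a\varepsilon)$, so the total elapsed time is $\tau\sim\pi/(2a\varepsilon)$ and thus $|\vartheta(\tau)|\sim\pi/(2a\varepsilon^2)$; with the orientation convention of the statement this gives $\theta_0\ge d/\varepsilon^2$ for any fixed $d<\pi/(2a)$ and $\varepsilon$ small, which is item~2.
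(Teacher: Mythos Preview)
Your three-leg strategy is sound and, for item~2, essentially coincides with the paper's argument: both obtain a lower bound $\tau\gtrsim 1/\varepsilon$ for the excursion time (the paper via a Gronwall estimate, equation~\eqref{eq:tau}, and an upper bound $\tau\le d_0/\varepsilon$ from $\dot\rho\ge c_1\rho$ near $p_-$), and then use $\dot\vartheta=-\varepsilon^{-1}+\O(1)$ to get $\theta_0\gtrsim\varepsilon^{-2}$. Your identification of the saddle exponent $2/(a+\sigma)$ and the resulting size $|S^1|^{2/a}\sim\varepsilon^{2-2/a}\mathrm{e}^{-\pi/(a\varepsilon)}$ is exactly the right heuristic for item~1.

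Where you diverge from the paper is in the mechanism for item~1. You propose a local transition-map (Shilnikov $\lambda$-lemma) argument through a box around $p_-$, and you correctly flag the obstacle: the imaginary part $\varepsilon^{-1}$ of the focus eigenvalues blows up, so the usual normal-form/linearisation estimates do not come with $\varepsilon$-uniform constants off the shelf. You leave this unresolved, and it is a genuine gap in the proposal as written. Moreover, leg~(iii) also needs care: once the orbit exits your box at distance $\sim|S^1|^{2/a}$ from $W^{\uns}(p_-)$, you must propagate this bound up to $z=0$ with only bounded distortion, and along $W^{\uns}(p_-)$ the normal direction is not uniformly contracting, so a naive Gronwall would lose the exponential smallness over the $\O(1)$ time of leg~(iii) only if the constants are controlled---again not automatic.

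The paper sidesteps both difficulties by a single global device: after rectifying $W^{\uns}(p_-)$ to the exact ellipsoid via~\eqref{eq:rho}, it uses the function $H(\rho,z)=\rho^{2/a}\big[1-z^2-\tfrac{b}{a+1}\rho^2\big]$, which vanishes on $W^{\uns}(p_-)$ and satisfies $|dH/dt|\le K\varepsilon\,H$ along orbits (Step~1 of the proof). Since $H$ is $\theta$-independent, the fast rotation never enters; since the bound is multiplicative in $H$ over the whole region $\{z\le 0\}$, no separate treatment of legs~(ii) and~(iii) is needed. One then reads off $H(Q_0)\le K\,H(q_0)\sim\rho_{\uns}(0)^{2/a}\sim\varepsilon^{2-2/a}\mathrm{e}^{-\pi/(a\varepsilon)}$, and since $\rho_{\uns}(\tau)=\O(1)$, this translates directly into the distance from $Q_0$ to $\mathbf{C}^{\uns}$. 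In short: your transition-map route is viable but would require rederiving the $\lambda$-lemma with $\varepsilon$-uniform constants; the paper's near-integral $H$ encodes the saddle exponent $2/a$ in its very definition and makes the estimate a one-line Gronwall.
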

The arguments to check the existence of $Q_0$ and to get the estimation of the distance complete the results in~\cite{DIKS13},
where the required quantitative estimates were not proven.
The proof of Proposition~\ref{prop:2crossings} is deferred to Section~\ref{subsec:proofprop}.

\subsection{Proof of the existence of heteroclinic and homoclinic connections}
In order to prove that System~\eqref{formanormalanalitica2} undergoes a Shilnikov bifurcation (first item in Theorem~\ref{Theoremdissipative})
we are going to
use a classical argument, similar to the one used in~\cite{DIKS13} (see also \cite{BroerV}) to obtain the curve $\cc^*$.
But since we are in the analytic context, we have to use the accurate information we have proven about the splitting
between the stable and unstable manifolds of $p_\pm$ and about the distance between $Q_0$ and the unstable manifold of $p_-$.

Denote by $\Gamma^*_\pm$ the curves $\Gamma^*_\rho$ given in theorem in Theorem~\ref{thm:heteroclinic2d},
with $\rho_\pm=(\pm c_1,c_2,c_3)$ corresponding to the constants:
$c_2=-2-\frac{2}{a}$,  $c_3=1$ and $c_1=2\sqrt{(\mathcal{C}_1^*)^2+(\mathcal{C}_2^*)^2}$ when $\mathcal{C}_1^* + i \mathcal{C}_2^*\neq 0$
and $c_1>0$ if $\mathcal{C}_1^* + i \mathcal{C}_2^*= 0$.

When the parameters are in these curves, by Theorem~\ref{thm:heteroclinic2d}, the two dimensional stable and unstable manifolds do not intersect.
Moreover, we know that when $(\varepsilon, \sigma)\in \Gamma^*_+$  the curve $\mathbf{C}^\uns$ is outside $\mathbf{C}^\sta$ and the contrary in
$\Gamma^*_-$, and the distance between these curves is of order $\O(c_1\d^{-2-\frac{2}{a}}\mathrm{e}^{-\frac{\pi}{2a\d}}$), see Figure~\ref{fig:Trapping}.

\begin{figure}
\subfloat{
\includegraphics[width=0.45\textwidth]{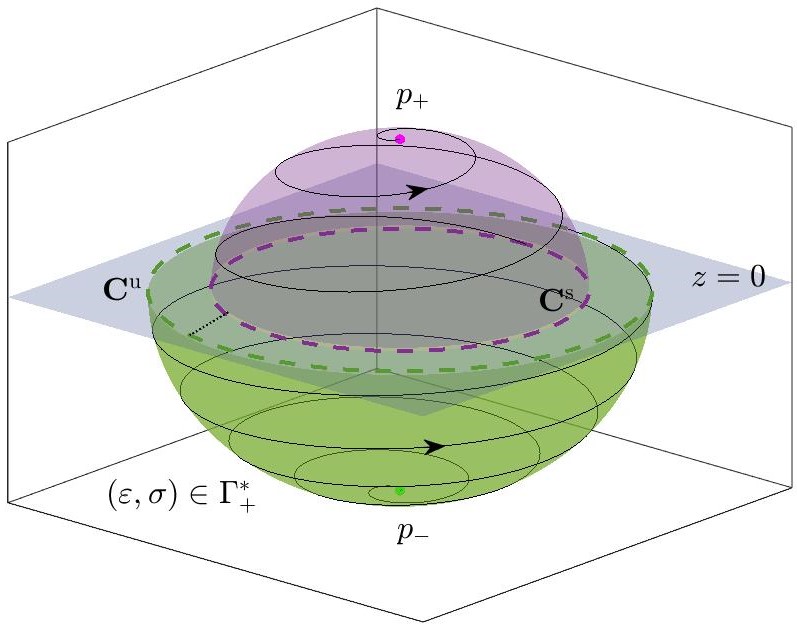}}
\hspace{0.8cm}
\subfloat{
\includegraphics[width=0.45\textwidth]{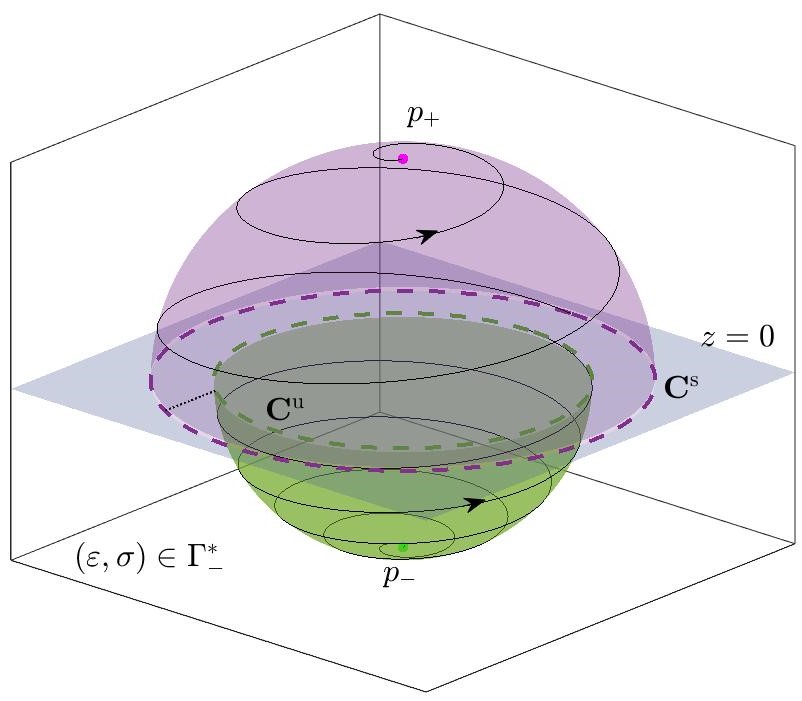}
}
\caption{On the left, the parameters $(\d,\sigma)\in \Gamma^*_+$ and $\mathbf{C}^\uns$ is outside of $\mathbf{C}^\sta$. On the right, the parameters
belongs to $\Gamma^*_-$ and $\mathbf{C}^\uns$ is inside of $\mathbf{C}^\sta$. In both case there are no intersections.}
\label{fig:Trapping}
\end{figure}

Fix $0<\varepsilon <\varepsilon_0$ and consider $\sigma \in [\sigma^*_{\rho_-}(\varepsilon), \sigma^*_{\rho_ +}(\varepsilon)]$.
By Proposition \ref{prop:2crossings} the point $Q_0$ is closer to $\mathbf{C}^\uns$ than the maximum distance between
$\mathbf{C}^\uns$ and $\mathbf{C}^\sta$.
Therefore  we can ensure  that when
$\sigma=\sigma^*_{\rho_-}(\varepsilon)$ the point $Q_0$ is inside $\mathbf{C}^\sta$, and for
$\sigma=\sigma^*_{\rho_+}(\varepsilon)$ the point $Q_0$ is outside $\mathbf{C}^\sta$, therefore, we know that it exists at least  one value
$\sigma = \sigma ^s(\varepsilon)\in [\sigma^*_{\rho_-}(\varepsilon), \sigma^*_{\rho_+}(\varepsilon)]$ where $Q_0 \in  \mathbf{C}^\sta$ and, consequently,
$W^\uns(p_+)\subset W^\sta(p_+)$ giving rise to a homoclinic orbit to $p_+$.

\begin{remark}
The previous argument resembles the proof of the existence of Shilnikov homoclinic orbits argued in \cite{DIKS13}
based in the notion of trapping region.
However, using our results we have a sharp  accurate domain on the parameters where the Shilnikov homoclinic orbits take place.
\end{remark}

This reasoning gives the existence of a homoclinic orbit to the point $p_+$ and therefore the existence of, at least, one homoclinic bifurcation for any
$0<\varepsilon<\varepsilon_0$. Then the existence of the curve $\Lambda^*$ in the parameter plane where system
\eqref{formanormalanalitica2} has Shilnikov bifurcations. Note that
$$
\Lambda^* \subset \mathcal{W}_1^* =\{ (\d,\sigma)\,:\, \sigma_{\rho_-}^*(\varepsilon ) < \sigma < \sigma_{\rho_+}^*(\varepsilon)\}
$$

With respect to the heteroclinic connections, by Theorem~\ref{thm:heteroclinic2d}, there is a heteroclinic connection
if and only if the symplectic distance $\bar{S}^2(\theta,\varepsilon,\sigma)=0$ for some $\theta$.
Let $\theta_*$ be the argument of $\mathcal{C}_1^* + i \mathcal{C}_2^*$, take any constant
$\kappa\in(0,1)$ and choose $|c_1|\leq \kappa \sqrt{(\mathcal{C}_1^*)^2+(\mathcal{C}_2^*)^2}$, $c_2=-2-\frac{2}{a}$ and $c_3=1$.
Therefore, if  $(\varepsilon,\sigma)\in \Gamma_\rho^*$, where  $\rho=(c_1,c_2,c_3)$, we need $\theta$ satisfying
\begin{equation*}
\cos\big (\theta_* - \theta +a^{-1} L_0 \log \varepsilon\big ) =
-\frac{c_1}{\sqrt{(\mathcal{C}_1^*)^2+(\mathcal{C}_2^*)^2}}(1+\mathcal{O}(\varepsilon)) + \mathcal{O}(|\log \varepsilon|^{-1}),
\end{equation*}
and this equation has two solutions if $\varepsilon$ is small enough provided $|c_1|\leq \kappa \sqrt{(\mathcal{C}_1^*)^2+(\mathcal{C}_2^*)^2}$
with $0< \kappa < 1$.

To prove the rest of the items in Theorem~\ref{Theoremdissipative} and Theorem~\ref{Theoremconservative} we apply the following theorem,
whose proof is in \cite{DIKS13}.
The theorem assumes accurate quantitative information about the relative position of the stable and unstable manifolds of the points $p_\pm$
and gives the existence of an homoclinic orbit to $p_+$.

\begin{theorem}[\cite{DIKS13}]\label{thm:diks}
Consider the  one-parameter unfolding of the Hopf-Zero singularity $X^*$ given in \eqref{formanormalanalitica2},
with $\sigma= \sigma(\varepsilon)$ given as a $\mathcal{C}^\infty$ function such that $\sigma(0)=0$.
Assume that
\begin{enumerate}
\item[H1]
There exist real constants $C_1>0$, $N_1$ and $\gamma_1>0$ such that the distance $S^1(\varepsilon,\sigma(\varepsilon))$
between the $1$-dimensional invariant manifolds satisfies:
 $$
 0<S^1(\varepsilon,\sigma(\varepsilon))\ \le C_1 \varepsilon^{N_1} \mathrm{e}^{-\frac{\gamma_1}{\varepsilon}}
 $$
\item [H2]
The radial  distance between the $2$-dimensional invariant manifolds denoted by $S^2(\theta, \varepsilon,\sigma(\varepsilon))$, can be written as:
\begin{equation}\label{eq:s2}
S^2(\theta, \varepsilon,\sigma(\varepsilon))=S^2_b(\theta,\varepsilon,\sigma(\varepsilon))+S^2_f(\varepsilon,\sigma(\varepsilon))
\end{equation}
and calling $M (\varepsilon)= \max _{\theta\in [0,2\pi]}|S^2_b(\theta,\varepsilon,\sigma(\varepsilon))|$,
there exist real constants $C_2>0$, $N_2$ and $\gamma_2>0$ such that:
$$
C_2 \varepsilon^{N_2} \mathrm{e}^{-\frac{\gamma_2}{\varepsilon}}\le M (\varepsilon)
$$
\item [H3]
The function $S^2_f(\varepsilon,\sigma(\varepsilon))$ satisfies:
$$
0\leq S^2_f(\varepsilon,\sigma(\varepsilon))\leq \tilde {C}_2 \varepsilon^{N_2} \mathrm{e}^{-\frac{\gamma_2}{\varepsilon}}
$$
for some $0<\tilde{C}_2<C_2$.
\item [H4]
The constants $\gamma_1,\gamma_2$ and $a$ satisfy:
$$
\frac{\gamma_2}{\gamma_1}<\frac{2}{a}.
$$
\item [H5]
Let $Q_0$ be the second intersection point of the unstable manifold of $p_+$ with the plane $z=0$ and write $Q_0=(r_0 \cos \theta_0,r_0 \sin \theta_0,0)$, where the angle $\theta_0=\theta_0(\varepsilon,\sigma(\varepsilon))$ is a $\mathcal{C}^\infty$ function.
There exists a $\mathcal{C}^\infty$ function $\theta_0^*(\varepsilon)$ such that
$S^2_b(\theta_0^*(\varepsilon),\varepsilon,\sigma(\varepsilon))=0$ and
$$
\theta_0(\varepsilon,\sigma(\varepsilon))-\theta_0^*(\varepsilon)\to \infty \  \text{when} \ \varepsilon \to 0.
$$
\end{enumerate}
Then, for $0<a<2$ there exists a sequence of parameter values $\varepsilon_n \to 0$, such that System~\eqref{formanormalanalitica2}
at $(\varepsilon_n, \sigma(\varepsilon_n))$ has a Shilnikov homoclinic orbit to $p^+$ which intersects the plane $z=0$ only at two points.
\end{theorem}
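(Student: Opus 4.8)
The plan is to follow the one-dimensional unstable manifold $W^{\uns}(p_+)$ through its first two crossings of the plane $\{z=0\}$ and to manufacture a homoclinic orbit by detecting a sign change, as $\varepsilon\to 0$, in the signed radial distance from the second crossing point $Q_0$ to the curve $\mathbf{C}^{\sta}=W^{\sta}(p_+)\cap\{z=0\}$. Indeed, a homoclinic orbit to $p_+$ meeting $\{z=0\}$ exactly twice is present precisely when $Q_0\in\mathbf{C}^{\sta}$: in that case $W^{\uns}(p_+)$ lands on the two-dimensional stable manifold $W^{\sta}(p_+)$ and spirals back to $p_+$ without returning to $\{z=0\}$. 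So the whole statement reduces to finding zeros of a scalar function $D(\varepsilon)$, which is continuous (indeed smooth) in $\varepsilon$ because the flow, hence $Q_0$ and its argument $\theta_0$, depend smoothly on $\varepsilon$.

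First I would fix the geometry near the two equilibria. By H1 the manifold $W^{\uns}(p_+)$ crosses $\{z=0\}$ a first time at a point $q_0$ lying at distance $S^1(\varepsilon,\sigma(\varepsilon))>0$ from $W^{\sta}(p_-)\cap\{z=0\}$; since $S^1>0$ it does not connect to $p_-$ and must leave a neighbourhood of $p_-$ shadowing the two-dimensional unstable manifold $W^{\uns}(p_-)$. A quantitative Shilnikov passage near the saddle-focus $p_-$ — whose real eigenvalue and the real part of its complex pair equal, up to $\mathcal{O}(\varepsilon,\sigma)$, $-2$ and $a$ respectively, with ratio $2/a$, and whose rotation frequency is $\mathcal{O}(1/\varepsilon)$ — then yields a second crossing point $Q_0$ of $W^{\uns}(p_+)$ with
\begin{equation*}
\mathrm{dist}(Q_0,\mathbf{C}^{\uns})\le C\,\varepsilon^{\frac{2N_1}{a}}\mathrm{e}^{-\frac{2\gamma_1}{a\varepsilon}},\qquad \mathbf{C}^{\uns}=W^{\uns}(p_-)\cap\{z=0\},
\end{equation*}
and whose argument $\theta_0(\varepsilon)$ winds to $+\infty$ as $\varepsilon\to 0$ (the $\mathcal{O}(1/\varepsilon)$ rotation acting over the $\mathcal{O}(1/\varepsilon)$-long passage near $p_-$ forces $\theta_0\sim 1/\varepsilon^{2}$). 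In the concrete application these are exactly the two conclusions of Proposition~\ref{prop:2crossings} (which completes the analysis of~\cite{DIKS13,BF03,BF05}); in the abstract setting of this theorem the winding is instead supplied by H5.

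Next I would assemble the scalar equation. Writing $r^{\uns,\sta}(\theta)$ for the radii parameterizing $\mathbf{C}^{\uns,\sta}$ and recalling $\bar S^2=r^{\uns}-r^{\sta}$, the signed radial distance from $Q_0$ to $\mathbf{C}^{\sta}$ equals $D(\varepsilon)=\bar S^2(\theta_0(\varepsilon),\varepsilon,\sigma(\varepsilon))+E(\varepsilon)$ with $|E(\varepsilon)|\le \mathrm{dist}(Q_0,\mathbf{C}^{\uns})$, and by the decomposition in H2--H3 this is $D(\varepsilon)=S^2_b(\theta_0(\varepsilon),\varepsilon,\sigma(\varepsilon))+S^2_f(\varepsilon,\sigma(\varepsilon))+E(\varepsilon)$. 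Dividing by the reference size $\varepsilon^{N_2}\mathrm{e}^{-\gamma_2/\varepsilon}$ gives a continuous function $\widetilde D(\varepsilon)$, and here is where H4 enters: since $2\gamma_1/a>\gamma_2$ the rescaled error satisfies $E(\varepsilon)\,\varepsilon^{-N_2}\mathrm{e}^{\gamma_2/\varepsilon}\to 0$, so
\begin{equation*}
\widetilde D(\varepsilon)=\widetilde S^2_b(\theta_0(\varepsilon),\varepsilon)+\widetilde S^2_f(\varepsilon)+o(1),
\end{equation*}
where by H3 one has $0\le \widetilde S^2_f(\varepsilon)\le \tilde C_2<C_2$, and by H2 together with the essentially sinusoidal dependence of $S^2_b$ on $\theta$ (Theorem~\ref{thm:heteroclinic2d}) the function $\widetilde S^2_b(\cdot,\varepsilon)$ attains, over a period of fixed length, a maximum $\widetilde M(\varepsilon)\ge C_2$ and a minimum $-\widetilde M(\varepsilon)\le -C_2$.

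The decisive step is then an oscillation-plus-intermediate-value argument. As $\varepsilon\to 0$ the continuous map $\varepsilon\mapsto\theta_0(\varepsilon)$ increases to $+\infty$ while the location of the maximum of $\widetilde S^2_b(\cdot,\varepsilon)$ moves only logarithmically slowly — equivalently, by H5, $\theta_0(\varepsilon)$ outruns the smooth zero $\theta_0^*(\varepsilon)$ of $\widetilde S^2_b(\cdot,\varepsilon)$ — so $\theta_0(\varepsilon)$ sweeps through the argmax and the argmin of $\widetilde S^2_b(\cdot,\varepsilon)$ for $\varepsilon$ arbitrarily close to $0$. At parameters where $\theta_0(\varepsilon)$ hits an argmax, $\widetilde D(\varepsilon)=\widetilde M(\varepsilon)+\widetilde S^2_f(\varepsilon)+o(1)\ge C_2+o(1)>0$; at parameters where it hits an argmin, $\widetilde D(\varepsilon)=-\widetilde M(\varepsilon)+\widetilde S^2_f(\varepsilon)+o(1)\le -C_2+\tilde C_2+o(1)<0$. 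By continuity of $\widetilde D$ there is a sequence $\varepsilon_n\to 0$ with $\widetilde D(\varepsilon_n)=0$, i.e. $Q_0\in\mathbf{C}^{\sta}$ and $W^{\uns}(p_+)\subset W^{\sta}(p_+)$, a homoclinic orbit to $p_+$ whose only crossings of $\{z=0\}$ are $q_0$ and $Q_0$. Finally, because $0<a<2$ the eigenvalues at $p_+$ satisfy the expansivity inequality $\varrho_+<\lambda_+$ (cf.\ Remark~\ref{Rem0a2}), so this homoclinic orbit is of Shilnikov type~\cite{Shil65,Shil67}. The genuinely hard point is the first step — the quantitative Shilnikov passage near $p_-$ in the singular regime where the rotation frequency is $\mathcal{O}(1/\varepsilon)$, which produces both the sharp bound on $\mathrm{dist}(Q_0,\mathbf{C}^{\uns})$ and the fast winding of $\theta_0$; this is precisely the content of Proposition~\ref{prop:2crossings} and is here encapsulated in hypotheses H1 and H5.
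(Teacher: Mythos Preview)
The paper does not prove Theorem~\ref{thm:diks}: it explicitly states that ``the proof is in \cite{DIKS13}'' and then merely \emph{verifies} hypotheses H1--H5 in the concrete situation, using Theorems~\ref{thm:splitting1d}, \ref{thm:heteroclinic2d} and Proposition~\ref{prop:2crossings}. So there is no in-paper argument to compare your sketch against; you are reconstructing the proof of a quoted result.

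That said, your strategy---define the signed radial distance $D(\varepsilon)$ from $Q_0$ to $\mathbf{C}^{\sta}$, rescale by $\varepsilon^{N_2}\mathrm{e}^{-\gamma_2/\varepsilon}$, and use the winding of $\theta_0$ together with the intermediate value theorem to produce zeros---is indeed the mechanism behind the result in \cite{DIKS13}, and your identification of the role of H4 (so that the error $E(\varepsilon)$ coming from $\mathrm{dist}(Q_0,\mathbf{C}^{\uns})$ is negligible after rescaling) is correct. Two points deserve care. First, the bound $\mathrm{dist}(Q_0,\mathbf{C}^{\uns})\le C\varepsilon^{2N_1/a}\mathrm{e}^{-2\gamma_1/(a\varepsilon)}$ is \emph{not} part of H1--H5; the paper itself remarks that Proposition~\ref{prop:2crossings} ``complete[s] the results in~\cite{DIKS13}, where the required quantitative estimates were not proven'', so in the abstract statement this estimate is really an extra ingredient that must be supplied (you correctly point to Proposition~\ref{prop:2crossings}, but you should not say it is ``encapsulated in H1 and H5''). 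Second, H2 only gives $\max_\theta|S^2_b|\ge C_2\varepsilon^{N_2}\mathrm{e}^{-\gamma_2/\varepsilon}$; to get $\widetilde D$ both positive and negative you are using that $S^2_b$ attains values near $\pm M(\varepsilon)$, which you justify by invoking the sinusoidal form from Theorem~\ref{thm:heteroclinic2d}. That is fine in the concrete application, but strictly speaking it imports structure beyond the abstract hypotheses; in \cite{DIKS13} this zero-mean/oscillation property of $S^2_b$ is part of the setup.
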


In what follows we check that the hypotheses of Theorem~\ref{thm:diks} are satisfied by System~\eqref{formanormalanalitica2}.
This will end the proofs of Theorems~\ref{Theoremdissipative} and~\ref{Theoremconservative}.

\begin{enumerate}
\item [H1]
By Theorem \ref{thm:splitting1d}, the first hypothesis is true taking $C_1=\mathrm{e}^{\frac{\pi c_0}{2}}\mathcal{C}^*$, $N_1=-1+a$
and $\gamma_1=\frac{\pi}{2}$.
\item [H2]
From Theorem \ref{thm:heteroclinic2d} we obtain that the distance ${S}^{2}(\theta,\varepsilon,\sigma)$ is given by~\eqref{eq:s2} with:
$$
\begin{array}{rcl}
S^2_f( \varepsilon, \sigma)
&=& \sqrt{\frac{b}{a+1}}\Upsilon^{[0]}(1+\O(\varepsilon))\\
S^2_b(\theta,\varepsilon, \sigma) &=&\sqrt{\frac{b}{a+1}}\varepsilon^{-2-\frac{2}{a}} \mathrm{e}^{-\frac{\pi}{2a\varepsilon}}
\left [\mathcal{C}_1^* \cos (\theta - a^{-1} L_0 \log \varepsilon) \right.
\\
&& \left. +
\mathcal{C}_2^* \sin (\theta - a^{-1} L_0 \log \varepsilon) + \mathcal{O}(|\log \varepsilon|^{-1})
\right ].
\end{array}
$$
Therefore hypothesis H2 is true taking
$C_2=\sqrt{\frac{b}{a+1}} \, \sqrt{(\mathcal{C}^*_1)^2 +(\mathcal{C}^*_2)^2}$, $N_2=-2-\frac{2}{a}$ and $\gamma_2=\frac{\pi}{2a}$.
\item[H3]
In the volume preserving case, because $\Upsilon^{[0]}=0$, one has $S^2_f(\sigma, \varepsilon)=0$ and
H3 is true taking any value for the constant $\tilde {C}_2\le C_2$.
The same happens in the general case, if the parameters $(\varepsilon, \sigma)$ belong to the curve
$\Gamma_0^*=\{(\varepsilon, \sigma), \  \sigma= \sigma_0^*(\varepsilon)\} $ where one has that
$\Upsilon^{[0]}(\varepsilon, \sigma_0^*(\varepsilon))=0$.

In the general case we have in addition more curves where hypothesis H3 becomes true. Indeed,
let $\kappa\in (0,1)$.
One can take the parameters along any curve $\Gamma_\rho^*=\{ (\varepsilon, \sigma),\ \sigma=\sigma^*_\rho(\varepsilon)\}$,
taking  $\rho=(c_1,c_2,c_3)$ with
$|c_1|\leq \kappa \sqrt{(\mathcal{C}^*_1)^2 +(\mathcal{C}^*_2)^2}$, $c_2\ge -2-\frac{2}{a}$, $c_3\ge 1$.
As $\Upsilon^{[0]}=c_1 \d^{c_2}  \mathrm{e}^{-\frac{c_3 \pi}{2a\varepsilon}}$ along this curve,  we have hypothesis H3 satisfied
with $\tilde{C}_2=\tilde{\kappa} \sqrt{\frac{b}{a+1}} \sqrt{(\mathcal{C}^*_1)^2 +(\mathcal{C}^*_2)^2}$,
for some $0<\kappa \leq  \tilde{\kappa}<1$.
\item[H4]
This condition is immediate:
$$
\frac{\gamma_2}{\gamma_1}= \frac{1}{a}<\frac{2}{a}
$$


\item [H5]
To check the last hypothesis, first we observe that $\theta=\theta _0^* (\varepsilon)$ has to satisfy:
\[
\cos\big (\theta_* - \theta +a^{-1} L_0 \log \varepsilon \big ) =\mathcal{O}(|\log \varepsilon|^{-1})
\]
where $\theta ^*$ is the argument of $\mathcal{C}^*_1+i\mathcal{C}^*_2$. Therefore, using the results in
Proposition~\ref{prop:2crossings} to compute $\theta_0=\theta_0(\varepsilon,\sigma (\varepsilon))$:
$$
\theta_0(\varepsilon,\sigma(\varepsilon))-\theta_0^*(\varepsilon)>\frac{d}{\d^2}-\frac{1}{a}L_0 \log \d+\frac{\pi}{2}-\theta_* +\mathcal{O}(|\log \varepsilon|^{-1}) \to \infty \
\text{when} \ \varepsilon \to 0.
$$
\end{enumerate}
Therefore, taking the parameters $(\d, \sigma)$ along any of these curves $\Gamma^* _\rho$ one can apply Theorem \ref{thm:diks} which gives
the existence of a sequence $\d_n$ such that System~\eqref{formanormalanalitica2} has an homoclinic orbit to $p_+$.
The condition $0<a<2$ ensures that the homoclinic orbit is of Shilnikov type.

\subsection{Proof of Proposition~\ref{prop:2crossings}}\label{subsec:proofprop}

As Theorem \ref{thm:splitting1d} proves that  the $1$-dimensional unstable manifold of the north pole, $W^\uns(p^-)$, intersects $z=0$ very close to
the $1$-dimensional stable manifold of the south pole $W^\sta(p^+)$,
the first step is to study the solutions of system~\eqref{formanormalanalitica2} near $W^{\sta}(p_-)$ in $z\le 0$.
More concretely, we want to see that orbits entering the plane $z=0$ near
the $1$-dimensional stable manifold
$W^\sta(p_-)$, leave the plane again near the $2$-dimensional unstable manifold $W^\uns(p_-)$.

In order to control de behavior of the solutions which enter $z\le 0$ near the $1$-dimensional stable manifold $W^\sta(p_-)$, in particular $W^\uns(p_+)$,
we perform an analytic change of coordinates such that the point $p_-$ becomes $(0,0,-1)$ and
its stable manifold $W^{\sta}(p_-)$ becomes the $z$-axis.
As this change does not affect the terms in normal form, we obtain the following system, that we write keeping the notation $(x,y,z)$:
\begin{equation}\label{sistemaejez}
 \begin{array}{rcl}
  \displaystyle\frac{dx}{dt}&=&\displaystyle x\left(\s-\dd z\right)+\frac{y}{\d}+
  \d \tilde{f} (x,y,z, \d,\s),\medskip\\
  \displaystyle\frac{dy}{dt}&=&\displaystyle-\frac{x}{\d}+y\left(\s-\dd z\right)+
  \d \tilde{g} (x,y,z, \d,\s),\medskip\\
  \displaystyle\frac{dz}{dt}&=&-1+b(x^2+y^2)+z^2+\d \tilde{h} (x,y,z, \d,\s),
 \end{array}
\end{equation}
where $\tilde f(x,y,z,\d,\s), \tilde g(x,y,z,\d,\s)=\O(\|(x,y)\|)$ and the function $\tilde h$ satisfies $\tilde h(x,y,z,\d,\s)=\O(\|(x,y,z+1)\|)$.
To finish we perform a suitable linear change of variables, $\mathcal{O}(\d^2)$ close to the identity, which is explained with detail in~\cite[Lemma 4.1]{BCS16a}
to put $\tilde h$ of the form
\begin{equation*}
\tilde{h} (x,y,z,\d,\s)=(z+1)\tilde{h}_1 (x,y,z,\d,\s) + \tilde{h}_2(x,y,z,\d,\s),
\end{equation*}
with $\tilde{h}_2(x,y,z,\d,\s)=\mathcal{O}(\|x,y,z+1\|^2)$ and $\tilde{h}_1$ bounded.
In addition, the corresponding $\tilde{f}$, $\tilde{g}$ keep the same properties.

From the results in~\cite{BF05} and~\cite[Proposition 4.4]{BCS16a}, it can be easily deduced that
\begin{equation}\label{aproxvariedad2D}
W^{\uns}(p_-)\cap \{(x,y,z),\ z\le 0\}=\left \{\frac{b}{a+1}(x^2+ y^2) + z^2 = 1+  \d  (1-z^2) \psi(z,\theta)\right \},
\end{equation}
where $\psi(z,\theta)$ is an analytic  periodic function in $\theta$ satisfying that there exists a constant $M>0$ such that for $z\le 0$, $\theta \in [0,2\pi)$:
\begin{equation*}
|\psi(\theta, z)|\le M, \qquad
\left |\frac{\partial \psi}{\partial z}(\theta, z)\right|\le M, \qquad
\left |\frac{\partial \psi}{\partial \theta}(\theta, z)\right |\le \varepsilon M.
\end{equation*}
That is, the $2$-dimensional unstable manifold of the point $p_-=(0,0,-1)$ in the domain $z\le 0$ is $\varepsilon$-close to the ellipsoid \eqref{eq:sfere}.

\begin{lemma}\label{lemadif}
Let $D=\overline{D}$ be the closed region $D\subset \{(x,y,z)\in \mathbb{R}^3 : z\leq 0\}$ with boundaries:
$$
\partial D = \{z=0\} \cup W^{\uns}(p_-).
$$
Take $\zeta_1(t), \zeta_2(t)$ two solutions of system~\eqref{sistemaejez}  such that
$$
\zeta_1(0),\zeta_2(0)\in \partial D_0:=\{z=0\} \cap \left \{b(x^2+y^2)\leq \frac{1}{2}\right \}.
$$
Then, there exist $\tau,C>0$ such that $\zeta_1(t),\zeta_2(t) \in D$ for all $t\in [0,\tau]$
and
$$
\|\zeta_1(t) -\zeta_2(t)\| \leq \|\zeta_1(0)-\zeta_2(0)\| \mathrm{e}^{C\,t },\qquad t\in [0,\tau].
$$
\end{lemma}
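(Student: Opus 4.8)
The plan is to keep $\varepsilon$ and $\sigma$ frozen throughout and to produce $\tau$ and $C$ that depend only on the vector field \eqref{sistemaejez} and on the compact region $D$ (in particular, uniform over the chosen pair of solutions). The argument has three ingredients: a sign computation for $z'$ on $\partial D_0$, the invariance of $W^{\uns}(p_-)$, and a Gronwall estimate. First I would show that an orbit $\zeta(t)$ with $\zeta(0)\in\partial D_0$ enters and then cannot immediately leave $D$. On $\partial D_0$ one has $z=0$ and $b(x^2+y^2)\le\tfrac12$, and since $\tilde h(x,y,0,\varepsilon,\sigma)=\tilde h_1+\tilde h_2$ with $\tilde h_1$ bounded and $\tilde h_2$ continuous, $\tilde h$ is bounded there by some $M_0$, so
\[
\frac{dz}{dt}\bigg|_{\partial D_0}=-1+b(x^2+y^2)+\varepsilon\tilde h\le-\tfrac12+\varepsilon M_0<-\tfrac14
\]
for $\varepsilon$ small; hence $\partial D_0$ is crossed strictly downwards and the orbit enters $\{z<0\}$. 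Moreover $\zeta_1(0),\zeta_2(0)$ lie strictly on the ``inner'' side of $W^{\uns}(p_-)$: by \eqref{aproxvariedad2D} that manifold meets $\{z=0\}$ at level $\tfrac{b}{a+1}(x^2+y^2)=1+\O(\varepsilon)$, while on $\partial D_0$ one has $\tfrac{b}{a+1}(x^2+y^2)\le\tfrac1{2(a+1)}<1$. Since $W^{\uns}(p_-)$ is an invariant set, uniqueness of solutions forces an orbit starting in its complement to remain in its complement; therefore, if the orbit leaves $D$ it must do so through the only other part of $\partial D$, namely $\{z=0\}$, i.e.\ at a positive time where $z$ vanishes again.

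Next I would turn this into a uniform lower bound $\tau$ for the time the orbits spend in $D$. Because the right-hand side $F$ of \eqref{sistemaejez} is analytic and $\varepsilon$ is fixed, along any orbit contained in the compact set $D$ the second derivative $z''=\frac{d}{dt}\big(-1+b(x^2+y^2)+z^2+\varepsilon\tilde h\big)$ is bounded by a constant $B=B(\varepsilon,\sigma)$ (the $1/\varepsilon$ factors in $x',y'$ are harmless, $\varepsilon$ being frozen). Hence, as long as $\zeta(t)\in D$, one has $z'(t)\le-\tfrac14+Bt<0$ for $t<1/(4B)$, so $z(t)$ is strictly decreasing there and cannot return to the value $z(0)=0$ before $t=1/(4B)$; combined with the previous paragraph this rules out leaving $D$ before $t=1/(4B)$, and since the orbit stays in the compact set $D$ it is defined on that interval. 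Thus I would take $\tau:=1/(8B(\varepsilon,\sigma))$, and with this choice $\zeta_1(t),\zeta_2(t)\in D$ for all $t\in[0,\tau]$.

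Finally, the distance estimate is Gronwall's lemma. Putting $L:=\sup_{D}\|DF\|<\infty$, for $t\in[0,\tau]$ both $\zeta_1(t),\zeta_2(t)$ lie in $D$, so $w(t)=\zeta_1(t)-\zeta_2(t)$ satisfies $\|\dot w(t)\|=\|F(\zeta_1(t))-F(\zeta_2(t))\|\le L\|w(t)\|$, whence $\frac{d}{dt}\|w(t)\|\le L\|w(t)\|$ and $\|w(t)\|\le\|w(0)\|\,\mathrm{e}^{Lt}$; taking $C:=L$ finishes the proof. I do not expect a genuine obstacle here: once $\varepsilon$ is fixed this is a localization-plus-Gronwall argument and everything is elementary. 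The only point that needs care is the trapping step, where one must combine the strict negativity of $z'$ on $\partial D_0$ (which only controls behaviour near the top face $\{z=0\}$) with the invariance of $W^{\uns}(p_-)$ (which forbids escape through the bottom face), rather than trying to make $D$ forward invariant — it is not, since orbits on $W^{\uns}(p_-)$ cross $\{z=0\}$ upwards. One should also record that $\tau$ and $C$ depend only on the vector field and on $D$, so they are uniform over all admissible pairs $\zeta_1,\zeta_2$.
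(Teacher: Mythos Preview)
Your proof is correct for the lemma as literally stated, and the trapping step (sign of $z'$ on $\partial D_0$, invariance of $W^{\uns}(p_-)$, a second-derivative bound to produce an explicit $\tau$) is tidier than the paper's, which merely observes $\dot z<0$ on $\partial D_0$ and invokes existence of some $\tau>0$.

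The substantive difference is in the Gronwall step. You take $C=L=\sup_D\|DF\|$, and because the $(x,y)$ block of $DF$ carries the entries $\pm 1/\varepsilon$, your $C$ is of order $1/\varepsilon$. The paper instead writes the variational equation as $\dot{\Delta\zeta}=A\,\Delta\zeta+\mathcal{A}(t)\,\Delta\zeta$, where $A$ is the constant matrix containing the $\pm 1/\varepsilon$ rotation and $\mathcal{A}(t)$ is uniformly bounded in $\varepsilon$; since $\|e^{At}\|\le e^{(a+\sigma)t}$ regardless of $\varepsilon$ (the large entries only rotate), Gronwall then gives $C=K+a+\sigma$ independent of $\varepsilon$. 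This is not cosmetic: in the proof of Proposition~\ref{prop:2crossings} the lemma is used to deduce $\tau\ge \pi/(4C\varepsilon)$, and with an $\varepsilon$-independent $C$ this yields $\tau\gtrsim 1/\varepsilon$ and hence $\theta_0\gtrsim 1/\varepsilon^2$, which is the stated conclusion. With your $C\sim 1/\varepsilon$ one would only get $\tau\gtrsim 1$ and $\theta_0\gtrsim 1/\varepsilon$. So your approach proves the lemma but loses exactly the uniformity that makes it useful downstream; since you explicitly chose to freeze $\varepsilon$, it is worth recording that the paper's splitting of the fast rotation is the device that recovers it.
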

\begin{proof}
First we note that $\partial D_0 \subset \partial D\cap \{z=0\}$, because the maximum radius in $\partial D$ is $\sqrt{\frac{\dd +1}{b}} + \O(\d)$
which is greater than $\sqrt{\frac{1}{2b}}$ the maximum radius in $\partial D_0$.
The fact that $\tau>0$ is a direct consequence from the fact that, on $\partial D_0$, if $\varepsilon$ is small enough,
we have that $\dot{z}<-\frac{1}{2}+ \O(\d)<0$ and therefore the solutions $\zeta_1(t)$ and $\zeta_2(t)$ enter in $\text{int}{D}$.

We consider now $\Delta \zeta=\zeta_1-\zeta_2$.
As both are solutions of System~\eqref{sistemaejez} which is an analytical vector field, and we know that
$\zeta_1(t),\zeta_2(t)\in D$ for $0\le t\le \tau$ and $D$ is a bounded region, it is clear, by the mean value Theorem, that
$\Delta \zeta$ is a solution of the homogeneous linear equation
$$
\frac{d \Delta \zeta}{dt} = A \Delta \zeta + \mathcal{A}(t) \Delta \zeta, \qquad \max_{t\in [0,\tau]} \| \mathcal{A}(t)\| \leq K,
$$
being $A$ the matrix
\begin{equation*}
A =\left (\begin{array}{ccc} \s+\dd  & \frac{1}{\d} & 0 \\
-\frac{1}{\d} & \s+\dd & 0 \\ 0 & 0 & 0
\end{array}\right ).
\end{equation*}

Then, since $\| \mathrm{e}^{At}\| \leq  \mathrm{e}^{(a+\s)t}$ where $\| \cdot \|$ is the euclidian norm, we have that
$$
\|\Delta \zeta (t) \mathrm{e}^{-(a+\s) t}\|\leq  \|\Delta \zeta(0)\| +  K \int_{0}^t   \mathrm{e}^{-(a+\s)s} \|\Delta \zeta(s)\|\, ds
$$
and by Gronwall's lemma one deduces
$$
\|\Delta \zeta (t) \mathrm{e}^{-(\s+a)t}\|\leq  \|\Delta \zeta(0)\| \mathrm{e}^{K t }
$$
which gives the result taking $C=K+\sigma+a$.
\end{proof}

Next two lemmas are devoted to show how the flow of~\eqref{sistemaejez} behaves on suitable surfaces.

\begin{lemma}\label{cylinder}
For any $R_0>0$, there exists $\d_0>0$ small enough such that for any $\d\in (0,\d_0)$, $|\sigma|\le  \sigma_0 \d$ (see Theorem \ref{thm:heteroclinic2d})
and $0<R\leq R_0$ if one considers the cylinder
$$
C_R=\{ (x,y,z), \ x^2 +y^2 =R^2\}\cap \{(x,y,z), \ -1\leq z\leq -\d|\log \d| \}.
$$
the flow of system~\eqref{sistemaejez} is pointing outwards  the side boundary of $C_R$.
\end{lemma}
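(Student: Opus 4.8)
The plan is to show that the outward radial component of the vector field is strictly positive along the lateral surface $\{x^2+y^2=R^2\}$ of $C_R$. Since the outer normal to this surface is parallel to $(x,y,0)$, it suffices to prove that $\frac{d}{dt}(x^2+y^2)>0$ on $C_R$ once $\varepsilon$ is small enough, uniformly in $R\le R_0$ and in $|\sigma|\le\sigma_0\varepsilon$.

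First I would substitute the equations~\eqref{sistemaejez} into $x\dot x+y\dot y$: the rotational terms $y/\varepsilon$ and $-x/\varepsilon$ cancel, leaving the clean identity
\[
\tfrac12\tfrac{d}{dt}(x^2+y^2)=(x^2+y^2)(\sigma-az)+\varepsilon\big(x\,\tilde f(x,y,z,\varepsilon,\sigma)+y\,\tilde g(x,y,z,\varepsilon,\sigma)\big).
\]
On $C_R$ one has $z\le-\varepsilon|\log\varepsilon|$, hence $-az\ge a\varepsilon|\log\varepsilon|$ because $a>0$, while $|\sigma|\le\sigma_0\varepsilon$, so $\sigma-az\ge\varepsilon\big(a|\log\varepsilon|-\sigma_0\big)$. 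For the remainder I would use that $\tilde f,\tilde g=\mathcal{O}(\|(x,y)\|)$ uniformly on the fixed bounded region $\{x^2+y^2\le R_0^2,\ -1\le z\le 0\}$, which yields $|x\tilde f+y\tilde g|\le K(x^2+y^2)$ for a constant $K=K(R_0)$. Combining the two estimates,
\[
\tfrac12\tfrac{d}{dt}(x^2+y^2)\ge (x^2+y^2)\,\varepsilon\big(a|\log\varepsilon|-\sigma_0-K\big),
\]
and since $a|\log\varepsilon|\to\infty$ as $\varepsilon\to 0$, choosing $\varepsilon_0$ so that $a|\log\varepsilon|>\sigma_0+K$ for all $\varepsilon\in(0,\varepsilon_0)$ makes the right-hand side strictly positive on $C_R$, where $x^2+y^2=R^2>0$. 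This is exactly the claimed outward-pointing property.

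There is no genuine obstacle here; the only point to watch is that $\varepsilon_0$ must depend only on $R_0$ (and on the fixed data $a$, $\sigma_0$ and the bound $K$ coming from the normal-form remainders), not on the particular radius $R$, which is immediate because the estimate above is homogeneous of degree two in $(x,y)$. This lemma is meant to be used in tandem with Lemma~\ref{lemadif} and the description~\eqref{aproxvariedad2D} of $W^{\uns}(p_-)$, in order to confine the continuation of $W^{\uns}(p_+)$ inside the region bounded below by $W^{\uns}(p_-)$ until it returns to the plane $z=0$.
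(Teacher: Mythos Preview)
Your proposal is correct and follows essentially the same approach as the paper: compute $x\dot x+y\dot y$, observe that the rotational terms cancel, and use $z\le -\varepsilon|\log\varepsilon|$ together with $|\sigma|\le\sigma_0\varepsilon$ and $\tilde f,\tilde g=\mathcal{O}(\|(x,y)\|)$ to conclude positivity. The paper's proof is a one-line version of the same computation, writing $x\dot x+y\dot y=(\sigma-az)R^2+\varepsilon\,\mathcal{O}(R^2)\ge a\varepsilon R^2(|\log\varepsilon|+\mathcal{O}(1))>0$; your version simply makes the constants and the uniformity in $R\le R_0$ explicit.
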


\begin{proof}
The normal vector to $\partial C_R$
is $(x,y,0)$. Therefore we need to check that $x\dot{x}+y\dot{y}>0$ for $(x,y,z) \in \partial C_R$:
$$
x\dot{x}+y\dot{y}= (\s - \dd z)R^2 + \d \O(R^2) \geq  \dd \d R^2 (|\log \d| +\O(1)))>0
$$
if $\d$ is small enough.
\end{proof}

\begin{lemma}\label{elipsoide}
Take $\nu_1>0$ and the ellipsoid
$$
S_{\nu_1}=\left \{z^2 + \frac{b}{\dd+1}(x^2+y^2)=1-\nu_1\right \}.
$$
Then there exists $\d_0>0$ small enough such that for any $\d\in (0,\d_0)$, $|\sigma|\le  \sigma_0 \d$ the surface defined by
$S_{\nu_1}\cap\{-1\leq z \leq -\d |\log \d|\} $ is contained in the region $D$ defined in Lemma \ref{lemadif} and the flow  of system~\eqref{sistemaejez} points outwards.
\end{lemma}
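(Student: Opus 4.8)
The plan is to prove the two assertions separately, working on the compact surface $\Sigma:=S_{\nu_1}\cap\{-1\le z\le 0\}$, which contains the set in the statement, and keeping in mind that $\nu_1>0$ is fixed. For the inclusion in $D$, I would first note that on $S_{\nu_1}$ one has $z^2\le 1-\nu_1<1$, so $-1<z<1$ automatically, while on the set of interest $z\le-\varepsilon|\log\varepsilon|<0$; hence only the position relative to $W^{\uns}(p_-)$ needs checking. By~\eqref{aproxvariedad2D}, a point with $z\le 0$ lies in the interior of $D$ whenever $\frac{b}{a+1}(x^2+y^2)+z^2<1+\varepsilon(1-z^2)\psi(z,\theta)$; on $S_{\nu_1}$ the left side equals $1-\nu_1$, while the right side is at least $1-\varepsilon M$ since $0\le 1-z^2\le 1$ and $|\psi|\le M$. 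Thus the inclusion holds as soon as $\varepsilon M<\nu_1$, which fixes one smallness requirement on $\varepsilon_0$ (admissibly depending on the fixed $\nu_1$).

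For the outward direction of the flow, I would set $G(x,y,z)=z^2+\frac{b}{a+1}(x^2+y^2)$, so that ``pointing outwards $S_{\nu_1}$'' means $\dot G>0$. Differentiating along~\eqref{sistemaejez}, the $\pm x/\varepsilon$ and $\pm y/\varepsilon$ terms cancel in $x\dot x+y\dot y$, giving $\dot G=2z\dot z+\frac{2b}{a+1}\big[(x^2+y^2)(\sigma-az)+\varepsilon(x\tilde f+y\tilde g)\big]$ with $x\tilde f+y\tilde g=\mathcal{O}(x^2+y^2)$. On $S_{\nu_1}$ I substitute $b(x^2+y^2)=(a+1)(1-\nu_1-z^2)$, so that $\dot z=a(1-z^2)-(a+1)\nu_1+\varepsilon\tilde h$; a short algebraic simplification then shows that all the $\mathcal{O}(1)$ terms cancel and $\dot G=-2\nu_1 z+R$, where $R=2(1-\nu_1-z^2)\sigma+\frac{2b}{a+1}\varepsilon(x\tilde f+y\tilde g)+2\varepsilon z\tilde h$. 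Since $|\sigma|\le\sigma_0\varepsilon$ and $\tilde f,\tilde g,\tilde h$ are bounded on the compact set $\Sigma$, there is a constant $C>0$ independent of $\varepsilon$ with $|R|\le C\varepsilon$.

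To conclude, on $\Sigma\cap\{z\le-\varepsilon|\log\varepsilon|\}$ one has $-2\nu_1 z\ge 2\nu_1\varepsilon|\log\varepsilon|$, hence $\dot G\ge\varepsilon\big(2\nu_1|\log\varepsilon|-C\big)>0$ for $\varepsilon$ small enough, which is the desired statement. The only genuine content of the argument is the cancellation producing the factor $-2\nu_1 z$ in place of an $\mathcal{O}(1)$ term; this is exactly what forces a $z$-cutoff, and any cutoff $z\le-(\mathrm{const})\,\varepsilon|\log\varepsilon|$ — indeed anything $\gg\varepsilon$ — would serve, with $\varepsilon|\log\varepsilon|$ being the natural scale already used in Lemma~\ref{cylinder}. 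All the bounds on $\psi$, $\tilde f$, $\tilde g$, $\tilde h$ used above are among the normalizations recorded just before the statement, and I do not expect any further difficulty.
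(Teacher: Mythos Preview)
Your proof is correct and follows essentially the same approach as the paper: both establish the inclusion $S_{\nu_1}\subset D$ from the $\varepsilon$-closeness of $W^{\uns}(p_-)$ to the unperturbed ellipsoid, and both compute the derivative of $G=z^2+\frac{b}{a+1}(x^2+y^2)$ along the flow, obtaining the same cancellation to $-\nu_1 z+\mathcal{O}(\varepsilon)$ (up to the factor of $2$ from using $\dot G$ versus $z\dot z+\tfrac{b}{a+1}(x\dot x+y\dot y)$). Your write-up is slightly more explicit about the inclusion step and the structure of the remainder $R$, but the argument is the same.
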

\begin{proof}
As the unstable manifold of $p_-$, $W^\uns(p_-)$, is $\varepsilon$-close to the ellipsoid \eqref{eq:sfere} (see~\eqref{aproxvariedad2D})
in the region $z\le 0$ and $\nu_1$ is small but independent of
$\varepsilon$, is clear that if
$\varepsilon$ is small enough $S_{\nu_1}\cap\{-1\leq z \leq -\d |\log \d|\} $ is contained in the region $D$. Moreover:
\begin{align*}
z\dot{z} + \frac{b}{\dd +1} (x\dot{x} + y\dot{y}) =& z(-1 + z^2 + b(x^2+ y^2) + \d\O(\|x,y,z+1\|))
\\ & + \frac{b}{\dd +1} (x^2+ y^2) (\s - \dd z + \O(\d))
\\ =& z(\dd - \nu_1(\dd+1)-\dd z^2 +\d \O(\|x,y,z+1\|))\\
&+ (1-\nu_1-z^2)(- \dd z + \O(\d))\\
=& -\nu_1 z +\O(\d) \geq \d( \nu_1 |\log \d| + \O(1))>0
\end{align*}
if $\d$ is small enough.
\end{proof}

Recall that in  the coordinates which give System~\eqref{sistemaejez} the ``north'' equilibrium point is of the form $p_+=(0,0,1)+\O(\d)$.

\begin{proof}[End of the proof of Proposition~\ref{prop:2crossings}]
By Theorem \ref{thm:splitting1d} we know that  the points
$$
\zeta^{\uns}=W^{\uns}(p_+)\cap \{z=0\},\qquad \zeta^{\sta}=W^{\sta}(p_-) \cap \{z=0\},
$$
satisfy that, for some constant $\bar {\mathcal{C}}^*$,
\begin{equation}\label{expdem}
\|\zeta^{\uns}-\zeta^{\sta}\| = \d^{(-1+\dd)} \mathrm{e}^{-\frac{ \pi}{2 \d}}(\bar {\mathcal{C}}^*+\O(\d)).
\end{equation}
Let us consider $\zeta_{\uns}(t)$ and $\zeta_{\sta}(t)$ the solutions of system~\eqref{sistemaejez} such that
$\zeta_{\uns,\sta}(0)=\zeta^{\uns,\sta}$.
It is clear that, $\zeta_{\sta}$ is defined for all  $t\geq 0$ and  $\lim_{t\to +\infty} \zeta_{\sta}(t)= p_-=(0,0,-1)$,
therefore it belongs to the domain $D$ defined in Lemma~\ref{lemadif}.
In fact $\zeta_{\sta}(t) \in \{x=y=0\}$ for all $t\geq 0$.
Let $\tau>0$, given by this lemma, be such that $\zeta_{\uns}(t)\in D$ for $t\in[0,\tau]$ .
Then, applying Lemma~\ref{lemadif}, there exist a positive constant $C$, such that, for $0\le t \le \tau$:
\begin{equation}\label{desstauns}
\|\zeta_{\uns}(t)-\zeta_{\sta}(t)\| \leq  \frac{\bar {\mathcal{C}}^*}{\d^{1-\dd}} \mathrm{e}^{-\frac{ \pi}{2 \d}} \mathrm{e}^{C\, t}.
\end{equation}
As $\zeta_{\sta}(t) \to p_-=(0,0,-1)$ as
$t \to +\infty$,
one concludes that, in order to leave the domain $D$, $\tau$ has to satisfy
$$
\frac{1}{\d^{1-\dd}} \mathrm{e}^{-\frac{\pi}{2 \d}} \mathrm{e}^{C \tau} = \O(1)
$$
and therefore, we can assure that, at least
\begin{equation}\label{eq:tau}
\tau \geq \frac{\pi}{4C\d}.
\end{equation}
Observe that the lower bound for $\tau$ is not sharp but it will be enough for our purposes.

Now we prove that indeed, $\zeta_{\uns}(t)$ leaves the domain $D$, that is, it crosses the plane $\{z=0\}$ for some $t\geq \tau$.
With this result we prove the second item in Proposition~\ref{prop:2crossings}.
We proceed by assuming the contrary, that is, that $\zeta_{\uns}(t) \in \text{int} D $ for all $t>0$.
We do the proof in three steps:
\begin{itemize}
\item [\textbf{Step 1}]. Preliminary considerations.

Take $\nu_1>0$ and consider the closed region $V_{\nu_1}$ with boundary
$$
\partial V_{\nu_1}= W^{\uns}(p_-) \cup \{z=-\d |\log \d| \} \cup S_{\nu_1}
$$
where the ellipsoid $S_{\nu_1}$ is defined in Lemma \ref{elipsoide}.
Recall that, as it is indicated in~\eqref{aproxvariedad2D}, up to order $\d$ the unstable manifold $W^{\uns}(p_-)$ is well approximated by
the ellipsoid
$$
z^2 + \frac{b}{\dd+1}(x^2+y^2)=1.
$$
Consequently, the region $V_{\nu_1}$ is $\O(\d)$ close to the region
\begin{align*}
\{z\leq-\d & |\log \d|\} \bigcap \\
&\left \{-\sqrt{ 1-\frac{b}{\dd +1}(x^2+y^2)} \leq z\leq -\sqrt{ 1-\nu_1-\frac{b}{\dd +1}(x^2+y^2)}\right \}.
\end{align*}
\item [\textbf{Step 2}]. Leaving $V_{\nu_1}$ by $\{z=-\d |\log \d |\}$.

We claim that there exists $\tau_0>0$ independent of $\d$ such that
$$
\zeta_{\uns}(\tau_0) \in S_{\nu_1}.
$$
Indeed, in~\cite{BCS13} was proven that $\zeta_{\sta}(t) = (0,0,\tanh(-t)+\O(\d))$.
Then we can go down along $\zeta_{\uns}(t)$ as close to $z=-1$ as we want. Since $\nu_1$ is independent of $\d$ we can ensure that there exists  $\bar \tau _0$
independent of $\varepsilon$ such that $\pi^z \zeta_{\sta}(\bar \tau _0) =-1 +\nu_1/2$, consequently, as~\eqref{desstauns} assures
that $\zeta_{\uns}(t)$ will be exponentially close to $\zeta_{\sta}(t)$, there exists $\tau_0$ independent of $\varepsilon$ such that
$\zeta_{\uns}(\tau _0)\in S_{\nu_1}$ and enters in $V_{\nu_1}$.

Let $\tau_1$ be such that $\zeta_{\uns}(t) \in V_{\nu_1}$ if $t\in [\tau_0,\tau_1]$.
By Lemma~\ref{elipsoide}, the only way to leave the region $V_{\nu_1}$ is to cross the section $\{z=-\d|\log \d|\}$.
In addition, Lemma~\ref{cylinder} assures that $\zeta_{\uns}(t)$ leaves every cylinder $\{x^2 + y^2 =R^2\}$. Therefore,
we conclude that $\zeta_{\uns}(\tau_1) \in \{z=-\d |\log \d|\}$.

\item [\textbf{Step 3}]. Final conclusion.

Since we are assuming that $\zeta_{\uns}(t) \in \text{int} D$ for all $t>0$,
we have now that if $t\geq \tau_1$, then $\pi^z \zeta_{\uns}(t) \in [-\d |\log \d|,0)$. We write
$\zeta_{\uns}(t)=(x_{\uns}(t) , y_{\uns}(t),z_{\uns}(t))$.
It is now clear that $b (x_{\uns}^2 (\tau_1) + y_{\uns}^2 (\tau_1))\geq (\dd +1)(1-\nu_1 + \O(\d^2|\log \d|^2))$.
As, by Taylor's formula,  there exists $\xi_{t}\in [\tau_1,t]$ such that
\begin{align*}
z_{\uns}(t) = &z_{\uns}(\tau_1)+ \dot{z}_{\uns}(\tau_1) (t-\tau_1) + \frac{1}{2}\ddot{z}_{\uns}(\xi_t) (t-\tau_1)^2
\\= & -\d |\log \d| + (-1 + \d^2 \log^2 \d + b(x_{\uns}^2 (\tau_1) + y_{\uns}^2 (\tau_1))+ \O(\d)) (t-\tau_1)
\\ & + \frac{1}{2}\ddot{z}_{\uns}(\xi_t) (t-\tau_1)^2
\\ \geq & -\d |\log \d|+ (-1 + (\dd +1)(1-\nu_1) + \O(\d))(t-\tau_1) \\ &+  \frac{1}{2}\ddot{z}_{\uns}(\xi_t) (t-\tau_1)^2.
\end{align*}
Take now $t_m=\tau_1 + m\d |\log \d|$ with $m>0$. Since the solution $\zeta_{\uns}(s)$ remains bounded
for $s\geq \tau_1$,
$$
z_{\uns}(t_m) \geq \d |\log \d|\big (-1 -m+ m (\dd +1)(1-\nu_1) \big )+ \O(\d^2 |\log \d|^2).
$$
Take $\nu_1>0$ such that $(\dd +1 )\nu_1 <\dd$, then $(\dd +1) (1-\nu_1)-1>0$ and hence for
$$
m>\frac{2}{(\dd +1)(1-\nu_1)-1}
$$
we have that $z_{\uns}(t_m) \geq \d |\log \d|+ \O(\d^2 |\log \d|^2)>0$ which is a contradiction since we have assumed that $z_{\uns}(t)<0$ if $t>0$.
\end{itemize}

If we call $Q_0=(r_0\cos \theta_0,r_0\sin \theta_0,0)=\zeta_{\uns}(\tau)$ the point where $z_{\uns}(\tau)=0$,
and write the equations \eqref{sistemaejez}  in cylindrical coordinates we have that
$$
\frac{d\theta}{dt}=\frac{-1}{\d}+\O(\d)
$$
therefore, the estimate for $\theta_0$ in Proposition~\ref{prop:2crossings} comes from~\eqref{eq:tau}.

It remains to estimate the distance from $Q_0$ to the unstable circle $\mathbf{C}^\uns$. Let us introduce
$r^2=x^2+y^2$. It is a well known fact that, as $\sigma = \O(\d)$,
System~\eqref{sistemaejez} written in cylindrical coordinates is $\O(\d)$-close to an integrable system with first integral
$$
H(r,z)=r^{2/a}\left[ 1-z^2-\frac{b}{a+1}r^2\right].
$$
We introduce the new variable
\begin{equation}\label{eq:rho}
\rho = r(1+ \d \psi(z,\theta))^{-1/2},
\end{equation}
with $\psi$ defined by~\eqref{aproxvariedad2D}.
Then the two dimensional unstable manifold
of $p_-=(0,0,-1)$ is
$$
W^{\uns}(p_-)=\left \{ z^2 + \frac{b}{a+1} \rho^2 =1\right \}.
$$
The variation with respect to $t$ of $(\rho,z)$
is
\begin{equation}\label{edorho}
\begin{aligned}
\frac{d \rho}{dt} &= ( \s - a z)\rho + \d \rho \widehat{f}(\rho, z,\theta, \d,\s) \\
\frac{d z }{dt} &= -1+z^2 + b\rho^2 (1+\d \psi)+ \d (z+1) \widehat{h}_1 (\rho, z,\theta, \d,\s) + \d \widehat{h}_2 (\rho, z,\theta, \d,\s)
\end{aligned}
\end{equation}
with $\widehat{f},\widehat{h}_1$ bounded and $\widehat{h}_2$ of $\mathcal{O}(\|\rho, z+1\|^2)$.
That is, the properties of $\tilde{f}$ and $\tilde{h}_1$, $\tilde{h}_2$
are satisfied also after the change of variables.

\begin{itemize}
\item
[\textbf{Step 1}].
We first prove that, if $(\rho(t),z(t))\in \{H(\rho,z)\geq 0\} \cap \{z\leq 0\}$ for
$t\geq t_0$, then
\begin{equation}\label{casiH}
H(\rho(t),z(t)) \leq H(\rho(t_0),z(t_0)) \mathrm{e}^{K\d (t-t_0)}
\end{equation}
for some constant $K$ independent of the initial condition.

It is not difficult to see that
\begin{align*}
\frac{dH}{dt}(\rho,z) = & \frac{2}{a} H(\rho,z) \big (\s + \d \widehat{f}(\rho, z,\theta, \d,\s)\big ) \\
&-2z \rho^{2/a} \big (\d(z+1)  \widehat{h}_1 (\rho, z,\theta, \d,\s) +
\d \widehat{h}_2 (\rho, z,\theta, \d,\s) \big ) \\
&-\rho^{2+\frac{2}{\dd}} \frac{2b}{\dd +1} \big (\sigma  + \d \widehat{f}(\rho, z,\theta, \d,\s)\big ).
\end{align*}
Let us denote $\overline{\sigma}=\sigma\d^{-1} = \mathcal{O}(1)$ and
$$
\overline{h}_2(\rho,z,\theta,\d,\s) = 2z \widehat{h}_2 (\rho, z,\theta, \d,\s)
+\rho^2 \frac{2b}{\dd +1}\big (\overline{\sigma} +  \widehat{f}(\rho, z,\theta, \d,\s)\big ).
$$
The unstable manifold of $p_-$ is contained in $H(\rho,z)=0$ and therefore is given by
$\rho = \varphi(z) := \sqrt\frac{a+1}{b} (1-z^2)^{1/2}$. Consequently
$$
\frac{dH}{dt}(\varphi(z),z)\equiv 0.
$$
and using that $H(\varphi(z),z)\equiv 0 $, we obtain
$$
(z+1)z  \widehat{h}_1 (\varphi(z), z,\theta, \d,\s) + \overline{h}_2 (\varphi(z), z,\theta, \d,\s)\equiv 0.
$$
Notice that $D _{\rho} \widehat{h}_1$ is bounded and
$D _{\rho} \overline{h}_2 =\mathcal{O}(\|(\rho, z+1)\|)$.
Then, using the mean's value theorem we have that
\begin{align*}
\big |\widehat{h}_1 (\rho, z,\theta, \d,\s) &-\widehat{h}_1 (\varphi(z), z,\theta, \d,\s) \big | \\
 =&\left |\int_{0}^1 D _{\rho}\widehat{h}_1 (\rho + \lambda( \varphi(z) - \rho), z,\theta, \d,\s) \, d\lambda \big (\varphi(z) - \rho)
\right | \\
\leq &K |\varphi(z)-\rho| \\
\big |\widehat{h}_2 (\rho, z,\theta, \d,\s)&-\widehat{h}_2 (\varphi(z), z,\theta, \d,\s) \big | \\
=&\left |\int_{0}^1 D _{\rho} \overline{h}_2 (\rho + \lambda( \varphi(z) - \rho), z,\theta, \d,\s) \, d\lambda \big (\varphi(z) - \rho)
\right | \\
\leq &K  (|z+1|+\rho + |\varphi(z)-\rho|) |\varphi(z)-\rho|.
\end{align*}
As a consequence
\begin{equation}\label{bounddHt}
\left |\frac{dH}{dt}(\rho,z)  \right |\leq \d K H (\rho,z) + K \d\rho^{2/a}
\big (|z+1| + \rho + |\varphi(z)-\rho|\big ) |\varphi(z)-\rho|.
\end{equation}
Recall that $\varphi(z)=\sqrt{\frac{a+1}{b}} \sqrt{1-z^2}$. Notice on the one hand that
\begin{align*}
H(\rho,z) & = \rho^{2/a} \frac{b}{a+1} \left (\frac{a+1}{b} (1-z^2) -\rho^2 \right )\\
&=\rho^{2/a}\frac{b}{a+1} \big (\varphi(z)+ \rho \big )(\varphi(z) -\rho)
\end{align*}
which implies
$$
\rho^{2/a} |\varphi(z)- \rho|= \sqrt{\frac{\dd+1}{b}} H(\rho,z) \left (\sqrt{1-z^2} + \rho \sqrt{\frac{b}{a+1}} \right )^{-1}
$$
On the other hand, using that $\varphi(z),\rho>0$, we have that
$|\varphi(z)-\rho| \leq |\varphi(z)+\rho |$.
Therefore, using that $z\leq 0$, from~\eqref{bounddHt} and changing $K$ if necessary, we obtain:
\begin{align*}
\left |\frac{dH}{dt}(\rho,z)  \right |&\leq \d K H (\rho,z) \left (1 + \frac{|z+1| + \rho + |\varphi(z)-\rho|}
{\sqrt{1-z^2} + \sqrt{\frac{b}{a+1} \rho}} \right )\\
& \leq \d K H(\rho,z) \left (1 + \sqrt{\frac{|z+1|}{|1-z|}} + \frac{b}{a+1} + 1\right )\\
&\leq \d K H(\rho,z)
\end{align*}
This inequality implies, by Gronwall's lemma, bound~\eqref{casiH}, provided
$z(t_0)\leq 0$.
\item
[\textbf{Step 2}].
Now we provide a bound from below of $\tau$, the time that the solution $\zeta_{\uns}(t)$ needs for crossing again $\{z=0\}$.
Recall that $\zeta_{\uns,\sta}(t)$ are the solutions such that $\zeta_{\uns,\sta}(0)=\zeta^{\uns,\sta}\in \{z=0\}$.
We call $(r_\uns, \theta_\uns,z_\uns)$  the corresponding cylindrical coordinates and $\rho_{\uns}(t)$ the corresponding radius defined in \eqref{eq:rho}.
We will prove that,
\begin{equation}\label{boundt0}
\rho_\uns(\tau)=\mathcal{O}(1),\qquad \tau\leq \frac{d_0}{\d}.
\end{equation}
for some constant $d_0$.

First we consider  $t_1>0$ such that
$\zeta_{\uns}(t)$ crosses by the first time the plane $z=-\eta$ for some $\eta>0$.
We have that $z_{\uns}(t_1)=-\eta$. As argued before, $t_1$ is independent of $\d$ and the corresponding radius
$\rho^\uns (t_1)\sim K \varepsilon^{-1+a} \mathrm{e}^{-\frac{\pi}{2\varepsilon}}$, that is,
it  is of the same order as $\|\zeta^\uns - \zeta^\sta\|$ which is written in~\eqref{expdem}.

Now we consider the minimum value $t_2>t_1$  such that $\zeta_{\uns}(t_2) \in \{z=-\eta\}$.
We have that for $t\in [t_1,t_2]$, $z_{\uns}(t) \in [-1 , -\eta]$.
Then, by \eqref{edorho} we have that
$$
\dot{\rho}_{\uns} =-a z_{\uns} \rho_{\uns} + \rho_{\uns} \mathcal{O}(\d) \geq c_1 \rho_\uns
$$
with $c_1=a\eta + \mathcal{O}(\d)$.
This implies that
\begin{equation}\label{lowerboundr}
\rho_{\uns}(t) \geq \rho_{\uns}(t_1) \mathrm{e}^{c_1 (t-t_1)}, \qquad t\in [t_1,t_2].
\end{equation}
Assume $\eta$ small enough to take $\nu_1 < 1-\eta^2$, with $\nu_1>0$.
Note that
$$
z_\uns^2(t_1) + \frac{b}{a+1} \rho_\uns^2 (t_1) =\eta^2 + \mathcal{O}\big(\d^{-1+\dd} \mathrm{e}^{-\frac{ \pi}{2 \d}} \big ) <1-\nu_1,
$$
and hence, $(\rho_u(t_1),z_u(t_1))\notin V_{\nu_1}$.
Then, reasoning as before, there exists $t_1'>t_1$ independent of $\d$ such that
$\zeta_{\uns}(t)\in V_{\nu_1}$ for $t\in [t_1',t_2]$.
Then
$$
\frac{b}{a+1}\rho_{\uns}^2(t_2) \geq 1-\nu_1-z^2_{\uns}(t_2) =1- \nu_1 -\eta^2>0
$$
which is an independent of $\d$ constant.
Consequently, when $\zeta_{\uns}$ crosses the plane $\{z=-\eta\}$ the radius $\rho_{\uns}(t_2)=\mathcal{O}(1)$.
We recall that
$\rho_{\uns}(t_1)\sim K \varepsilon^{-1+a} \mathrm{e}^{-\frac{\pi}{2\varepsilon}}$.
Then by~\eqref{lowerboundr} $t_2$ has to satisfy
$$
t_2 -t_1
\leq  \frac{1}{c_1} \left [\log \rho_{\uns}(t_2) + \frac{\pi}{2\d} + (a-1) \log \d\right ]
\leq \frac{c_2}{\d}
$$
for some positive constant $c_2$.

The time $\tau$ $\zeta_{\uns}$ needs to meet  the plane $z=0$ satisfies $\tau-t_2 = \mathcal{O}(1)$.
Indeed, on the one hand, meanwhile the solution is in $V_{\nu_1}$, that is if $z_{\uns}(t) \leq -\d|\log \d|$,
we have that
$$
\dot{z}_{\uns} = -1 + b \rho_{\uns}^2 + z_{\uns}^2+\mathcal{O}(\d) \geq -1 + (a+1)(1- \nu_1 -\eta^2) +\mathcal{O}(\d) \geq
\frac{a}{2}
$$
if we take $\nu_1, \eta$ and $\d$ small enough.
Then, in this case we get $z=-\d|\log \d|$ in a finite time. On the other hand
the transition from $z=-\d|\log \d|$ has been studied before obtaining also a finite (in fact of $\mathcal{O}(\d|\log \d|)$) time.
\item
[\textbf{Step 3}].
Final conclusion.
Using~\eqref{casiH} with $t_0=0$ and $t=\tau$ and~\eqref{boundt0}:
$$
H(\rho_{\uns}(\tau),z_{\uns}(\tau)) \leq K H(\rho_{\uns}(0),z_{\uns}(0)).
$$
Again by~\eqref{boundt0}, we deduce from the above bound that
$$
1 - \frac{b}{a+1} \rho_{\uns}^2(\tau) \leq K\rho_{\uns}(0)^{2/a}(1+ \mathcal{O}(\rho_{\uns}^2(0)).
$$
It follows that, there exists a constant $L>0$ such that
$$
\sqrt{\frac{a+1}{b}}- \rho_{\uns}(\tau)  \leq L \rho_{\uns}(0)^{2/a}.
$$
The required bound for the distance between $Q_0$ and the unstable curve $\mathbf{C}^u$ follows from the above inequality taking into account (\ref{expdem}) and that $\mathbf{C}^u$ is the circumference of radius $\sqrt{\frac{a+1}{b}}$.
\end{itemize}
\end{proof}

\section*{Acknowledgements}
I. Baldom\'a and T.M. Seara have been partially supported by the Spanish Government
MINECO-FEDER grant MTM2015-65715-P and the Catalan Government grant 2017SGR1049. S. Ib\'{a}\~{n}ez has been supported by the
Spanish Research project MTM2014-56953-P.



\end{document}